\renewcommand{\eqref}[1]{\hyperref[#1]{(\ref{#1})}}
\newlist{enumlist}{enumerate}{2}
\setlist[enumlist,1]{labelindent=0cm,label=\arabic*.,ref=\arabic*,labelwidth=2.5ex,labelsep=0.5ex,leftmargin=3ex,align=left,topsep=0.5ex,itemsep=1ex,parsep=1ex}
\setlist[enumlist,2]{labelindent=0cm,label=\theenumlisti.\arabic*.,ref=\arabic*,labelwidth=5ex,labelsep=0.5ex,leftmargin=5.5ex,align=left,topsep=0.5ex,itemsep=1ex,parsep=1ex}
\newlist{itemlist}{itemize}{1}
\setlist[itemlist]{labelindent=0cm,label=$\bullet$,labelwidth=2.5ex,labelsep=0.5ex,leftmargin=3ex,align=left,topsep=0.5ex,itemsep=1ex,parsep=1ex}
\theoremstyle{definition}\newtheorem{definition}{Definition}
\newtheorem*{definition*}{Definition}
\newtheorem{letterdef}{Definition}
\newtheorem{remark}[definition]{Remark}
\newtheorem{example}[definition]{Example}
\newtheorem{question}[definition]{Question}
\newtheorem*{example*}{Example}
\newtheorem*{examples*}{Examples}}
\newtheorem{proposition}[definition]{Proposition}
\newtheorem{lemma}[definition]{Lemma}
\newtheorem{letterthm}[letterdef]{Theorem}
\newtheorem{lettercor}[letterdef]{Corollary}
\theoremstyle{definition}}
\newcommand{\C}{\mathbb{C}}
\newcommand{\eps}{\varepsilon}
\newcommand{\al}{\alpha}
\newcommand{\be}{\beta}
\newcommand{\ot}{\otimes}
\newcommand{\Z}{\mathbb{Z}}
\newcommand{\vphi}{\varphi}
\newcommand{\cO}{\mathcal{O}}
\newcommand{\id}{\mathord{\text{\rm id}}}
\newcommand{\om}{\omega}
\newcommand{\N}{\mathbb{N}}
\newcommand{\ovt}{\mathbin{\overline{\otimes}}}
\newcommand{\si}{\sigma}
\newcommand{\R}{\mathbb{R}}
\newcommand{\F}{\mathbb{F}}
\newcommand{\cH}{\mathcal{H}}
\newcommand{\cZ}{\mathcal{Z}}
\newcommand{\Ad}{\operatorname{Ad}}
\newcommand{\cG}{\mathcal{G}}
\newcommand{\cK}{\mathcal{K}}
\newcommand{\actson}{\curvearrowright}
\newcommand{\cW}{\mathcal{W}}
\newcommand{\cU}{\mathcal{U}}
\newcommand{\cN}{\mathcal{N}}
\newcommand{\cV}{\mathcal{V}}
\newcommand{\cE}{\mathcal{E}}
\newcommand{\Aut}{\operatorname{Aut}}
\newcommand{\dpr}{^{\prime\prime}}
\newcommand{\Out}{\operatorname{Out}}
\newcommand{\Inn}{\operatorname{Inn}}
\newcommand{\bim}[3]{\mathord{\raisebox{-0.4ex}[0ex][0ex]{\scriptsize $#1$}{#2}\hspace{-0.25ex}\raisebox{-0.4ex}[0ex][0ex]{\scriptsize $#3$}}}
\newcommand{\otmax}{\otimes_{\text{\rm max}}}
\newcommand{\otalg}{\otimes_{\text{\rm alg}}}
\newcommand{\op}{^\text{\rm op}}
\newcommand{\SL}{\operatorname{SL}}
\newcommand{\Gammah}{\widehat{\Gamma}}
\begin{document}

\begin{center}
{\boldmath\LARGE\bf Spectral gap and strict outerness\vspace{0.5ex}\\ for actions of locally compact groups on full factors}

\bigskip

{\sc by Amine Marrakchi\footnote{UMPA, CNRS ENS de Lyon, Lyon (France). E-mail: amine.marrakchi@ens-lyon.fr} and Stefaan Vaes\footnote{\noindent KU~Leuven, Department of Mathematics, Leuven (Belgium). E-mail: stefaan.vaes@kuleuven.be.\\ S.V. is supported by FWO research project G090420N of the Research Foundation Flanders, and by long term structural funding~-- Methusalem grant of the Flemish Government.}}

\bigskip

In honor of Vaughan F.\ R.\ Jones.
\end{center}

\begin{abstract}\noindent
We prove that an outer action of a locally compact group $G$ on a full factor $M$ is automatically strictly outer, meaning that the relative commutant of $M$ in the crossed product is trivial. If moreover the image of $G$ in the outer automorphism group $\Out M$ is closed, we prove that the crossed product remains full. We obtain this result by proving that the inclusion of $M$ in the crossed product automatically has a spectral gap property. Such results had only been proven for actions of discrete groups and for actions of compact groups, by using quite different methods in both cases. Even for the canonical free Bogoljubov actions on free group factors or free Araki-Woods factors, these results are new.
\end{abstract}

\section{Introduction and main results}

To every strongly continuous action $G \actson^\al M$ of a locally compact group $G$ on a von Neumann algebra $M$ is associated the \emph{crossed product} von Neumann algebra $M \rtimes G$. Questions on how properties of the group action $G \actson^\al M$ are reflected by properties of the crossed product $M \rtimes G$ or the inclusion $M \subset M \rtimes G$ are among the most fundamental questions in operator algebras. When $G$ is a discrete group, several of the basic questions are easy to answer. For instance, for discrete groups $G$, we have that the relative $M' \cap M \rtimes G$ is as small as possible, i.e.\ equal to the center $\cZ(M)$, if and only if the action $G \actson^\al M$ is \emph{properly outer}: if $g \in G \setminus \{e\}$ and $b \in M$ are such that $b \al_g(a) = a b$ for all $a \in M$, then $b=0$. In particular, for an action $G \actson^\al M$ of a discrete group $G$ on a factor $M$, we have that $M' \cap M \rtimes G = \C 1$ if and only if the action $\al$ is \emph{outer}, meaning that $\al_g$ is an outer automorphism for every $g \in G \setminus \{e\}$.

When the acting group $G$ is no longer discrete, even these simple questions turn out to be quite subtle. Following \cite{Vae02}, a strongly continuous action $G \actson^\al M$ on a factor $M$ is called \emph{strictly outer} if $M' \cap M \rtimes G = \C 1$. It is easy to see that a strictly outer action $\al$ must be outer. The converse is however by no means true (see e.g.\ Example \ref{exam.counterexample} below). The main reason why crossed products by discrete groups are easier to study is the availability of a Fourier series decomposition for elements $x \in M \rtimes G$~: denoting by $E : M \rtimes G \to M$ the canonical conditional expectation and denoting by $(u_g)_{g \in G}$ the canonical unitary elements in $M \rtimes G$, we can define $(x)_g = E(x u_g^*)$ and formally write $x = \sum_{g \in G} (x)_g u_g$. When $G$ is no longer discrete, there typically is no normal conditional expectation of $M \rtimes G$ onto $M$ and there is no way to define a Fourier series decomposition.

To illustrate the subtleness of strict outerness, we mention here one of the fundamental results in modular theory for von Neumann algebras: by \cite[Theorem 5.1]{CT76}, a trace scaling action $\R \actson^\theta N$ of the group $\R$ on a II$_\infty$ factor $N$ is strictly outer. Because an inner automorphism is obviously trace preserving, a trace scaling action is outer, but its strict outerness is one of the deepest results in modular theory, established in \cite{CT76}.

In our first main result, we prove that outerness and strict outerness are equivalent for actions of a locally compact second countable (lcsc) group $G$ on a \emph{full} factor with separable predual. Recall that a factor $M$ with separable predual is called full if the group $\Inn M$ of inner automorphisms of $M$ is closed in the group of all automorphisms $\Aut M$. When $G$ is discrete, this result is trivial, as mentioned above. When $G$ is compact, this result was obtained recently in \cite{BMO19} using a method that is very specific for compact groups, making use of the subalgebra $M^G$ of $G$-invariant elements, which could very well be trivial when $G$ is noncompact.

Our second main result concerns the fullness of the crossed product $M \rtimes G$ when $G$ is a lcsc group acting on a full factor $M$ with separable predual. For such full factors, one considers the Polish group $\Out M = \Aut M / \Inn M$. When $G$ is discrete and $M$ is a II$_1$ factor, this question was solved by Vaughan Jones in \cite[Theorem 1]{Jon81}. Jones proved that if $G \actson^\al M$ is an outer action of a discrete group $G$ on a II$_1$ factor $M$ and if the image of $G$ in $\Out M$ is closed, then $M \rtimes G$ is full. More precisely, Jones proved that the inclusion $M \subset M \rtimes G$ has $w$-spectral gap (see below) if and only if the image of $G$ in $\Out M$ is closed. For arbitrary full factors $M$, not necessarily of type II$_1$, the same equivalence was proven when $G$ is discrete in \cite{Mar16,HMV16} (see also \cite{Mar18a}), and when $G$ is compact in \cite{BMO19}. The methods that were used in the discrete case and in the compact case were rather disjoint. In our second main result, we prove the equivalence in complete generality, for arbitrary full factors and locally compact groups, concluding the line of research that started in \cite{Jon81}.

Let $N \subset M$ be an inclusion of von Neumann algebras with separable predual. Generalizing \cite[Section 2]{Pop09} beyond the II$_1$ setting, we say that $N \subset M$ has \emph{$w$-spectral gap} if for every bounded sequence $a_n \in M$ satisfying $a_n b - b a_n \to 0$ $*$-strongly for all $b \in N$, there exists a bounded sequence $d_n \in N' \cap M$ satisfying $a_n - d_n \to 0$ $*$-strongly. We say that $N \subset M$ has \emph{stable $w$-spectral gap} if $1 \ot N \subset B(\ell^2(\N)) \ovt M$ has $w$-spectral gap.

\begin{letterthm}\label{thm.main}
Let $G \actson^\al M$ be a strongly continuous action of a lcsc group $G$ on a factor $M$ with separable predual. Assume that $M$ is full. Denote by $\pi : \Aut M \to \Out M$ the quotient homomorphism.
\begin{enumlist}
\item If the action $\al$ is outer, then $\al$ is strictly outer.
\item The following three properties are equivalent.
\begin{enumlist}[label=(\alph*)]
\item\label{stat.a} The action $\al$ is outer and the subgroup $\pi(\al(G))$ of $\Out M$ is closed.
\item\label{stat.b} The action $\al$ is strictly outer and $M \subset M \rtimes G$ has $w$-spectral gap.
\item\label{stat.c} The action $\al$ is strictly outer and $M \subset M \rtimes G$ has stable $w$-spectral gap.
\end{enumlist}
In particular, if $\al$ is outer and $\pi(\al(G))$ is a closed subgroup of $\Out M$, then $M \rtimes G$ is a full factor.
\end{enumlist}
\end{letterthm}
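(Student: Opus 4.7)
The trivial direction (c) $\Rightarrow$ (b) in part (2) is immediate. For (b) $\Rightarrow$ (a), given a sequence $(g_n) \subset G$ with $\pi(\alpha_{g_n})$ converging in $\Out M$ to some $\pi(\al_g)$, fullness of $M$ permits the adjustment by unitaries $v_n \in \cU(M)$ so that $\Ad(v_n) \circ \alpha_{g_n}$ converges to $\alpha_g$ in $\Aut M$. The bounded sequence $v_n u_{g_n} u_g^* \in M \rtimes G$ then asymptotically commutes with $M$, so $w$-spectral gap combined with strict outerness ($M' \cap (M \rtimes G) = \C 1$) forces $*$-strong convergence to a scalar, yielding $\pi(\al_g) \in \pi(\al(G))$ as required.

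The main implication is (a) $\Rightarrow$ (c). The first observation is that outerness of $\al$ makes $g \mapsto \pi(\al_g)$ injective, and assumption (a) then promotes this to a topological embedding of $G$ as a closed subgroup of the Polish group $\Out M$. My plan would be to analyze an asymptotically $(1 \otimes M)$-central bounded sequence $(a_n)$ in $B(\ell^2(\N)) \ovt (M \rtimes G)$ via an operator-valued ``smoothed Fourier'' representation: work with $a_n \cdot (1 \ot u(f)) = \int f(g)\, a_n (1 \ot u_g) \, dg$ for test functions $f \in C_c(G)$, so that asymptotic commutation with $1 \otimes M$ translates into an asymptotic intertwining of $\al_g$ with $\id_M$ in an integral sense. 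Fullness of $M$, used in the ultrapower form $M' \cap M^\om = \C 1$, should then kill all contributions coming from $g$ whose image $\pi(\al_g)$ stays bounded away from the identity in $\Out M$; by the closedness assumption (a), the remaining mass concentrates near $e \in G$, and an averaging argument over a small neighborhood of $e$ produces the desired $*$-strong correction by an element of $(1 \ot M)' \cap B(\ell^2(\N)) \ovt (M \rtimes G)$.

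Part (1) would then be a consequence: given outerness alone, the same integral analysis applied to a single $x \in M' \cap (M \rtimes G)$ forces its ``support'' to lie on the set of $g \in G$ for which $\al_g$ admits nonzero intertwiners with $\id_M$; outerness of $\al$ reduces this to $\{e\}$, yielding $x \in \cZ(M) = \C 1$. The single-element version does not require closedness of $\pi(\al(G))$. The main obstacle I expect is bridging the smoothed Fourier coefficients to a genuine strong-operator limit: the absence of a normal conditional expectation $M \rtimes G \to M$ for non-discrete $G$ removes the most direct tool of the discrete case, and the fixed-point algebra $M^G$ used by \cite{BMO19} may be trivial for noncompact $G$. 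Unifying those two methods into a single framework that treats an arbitrary lcsc $G$ is where the real difficulty lies, and I would expect the technical heart of the argument to be a lemma translating the hypothesis ``$\pi(\al(G))$ closed'' into a quantitative form of the principle that $\al_g$ near inner forces $g$ near $e$, valid uniformly along $M^\om$-type limits.
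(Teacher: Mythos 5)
There is a genuine gap: what you give is a plan rather than a proof, and the technical heart is missing. Most seriously, your argument for part (1) — that the ``support'' of an element $x \in M' \cap (M \rtimes G)$ lies on the set of $g$ for which $\al_g$ has a nonzero intertwiner with $\id_M$, and that outerness then reduces this to $\{e\}$ — would, as stated, show that every outer action on \emph{any} factor is strictly outer. That is false (modular flows on III$_0$ factors and dual actions of compact abelian groups give outer, non-strictly-outer actions; see Example \ref{exam.counterexample}). The point is that the relevant object is a direct integral of bimodules $\cH(\al_g)$ over a measure on $G$, and one must rule out weak containment of the trivial bimodule $L^2(M)$ in such an integral over sets of $g$ whose image stays away from the identity in $\Out M$; pointwise outerness is not enough, and this is exactly where fullness must enter in a \emph{quantitative} form — via the spectral gap inequalities of Jones (type II$_1$) and of \cite{HMV16,Mar18b} (types II$_\infty$ and III, the latter requiring relative modular operators and states invariant under $\Ad \Delta^{it}$). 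This is the paper's key Lemma \ref{lem.amine}; you correctly predict that such a lemma is needed but do not supply it, and your proposed substitute, fullness in the ultrapower form $M' \cap M^\om = \C 1$, is inadequate: the bounded sequences in the (stable) $w$-spectral gap statement are not assumed to be $\om$-centralizing, so they need not define elements of any ultrapower, and the weak limit points that must be controlled (e.g.\ weak limits of $Y_n^* Y_n$ in the paper's Step A) live in $B(\cH)$ rather than in $M^\om$.

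Second, even granting such a lemma, the smoothed Fourier coefficients $\int f(g)\, a_n (1 \ot u_g)\, dg$ do not by themselves yield the corrected sequence, and the bridge you flag as the ``main obstacle'' is precisely what remains to be built. The paper's route is: first compute $\al(M)' \cap (M \ovt B(L^2(G))) = 1 \ot L^\infty(G)$ (this already gives part (1), using Lemmas \ref{lem.easy} and \ref{lem.amine} and commutation with $U_h \ot \rho_h$); then show the $a_n$ asymptotically commute with cutoffs $1 \ot 1 \ot 1_\cU$ and with $1 \ot 1 \ot F$ for continuous $F$ (Steps A--C, again via Lemma \ref{lem.amine} applied to closed sets $L$, using that $\pi \circ \al$ is a closed map); conjugate by the canonical implementation $U$ to get asymptotic commutation with $1 \ot 1 \ot \cK(L^2(G))$ (since $a_n$ commutes with $1 \ot U_g \ot \rho_g$ and the closed span of $C_0(G)\,C^*_\rho(G)$ is $\cK(L^2(G))$); and finally invoke fullness once more through \cite[Theorem A]{Mar18b}, namely that the norm-closed linear span of $M\,JMJ$ contains $\cK(L^2(M))$, to compress down to $B(H) \ot 1 \ot L^\infty(G)$ and then to $B(H)$ via Lemma \ref{lem.compact}. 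None of these mechanisms appear in your sketch, so the implication (a) $\Rightarrow$ (c) is not established. (A minor point: your (b) $\Rightarrow$ (a) is circular as written, since you assume the limit point is already of the form $\pi(\al_g)$; the actual argument takes $g_n \to \infty$ with $\pi(\al_{g_n}) \to \id$ and shows the resulting asymptotically central unitaries $\al(a_n)(1 \ot \lambda_{g_n})$ cannot be asymptotically scalar because $g_n K \cap K = \emptyset$ eventually.)
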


Given an ultrafilter $\om$ on $\N$, we denote by $P^\om$ the Ocneanu ultrapower of a von Neumann algebra $P$ with separable predual. So, whenever $G \actson^\al M$ is an outer action on a full factor with $\pi(\al(G))$ being a closed subgroup of $\Out M$, Theorem \ref{thm.main} says in particular that $M' \cap (M \rtimes G)^\om = \C 1$. But the result of Theorem \ref{thm.main} is stronger, because it says that \emph{all} bounded sequences $a_n \in M \rtimes G$ that asymptotically commute with $M$ must be asymptotically scalar, without requiring that $(a_n)_{n \in \N}$ belongs to $(M \rtimes G)^\om$.

A wide class of examples to which Theorem \ref{thm.main} applies is given by the free Bogoljubov actions on free Araki-Woods factors. To every orthogonal representation $U : \R \actson H_\R$ is associated Shlyakhtenko's free Araki-Woods factor $M=\Gamma(H_\R,U)\dpr$, see \cite{Shl96}. Whenever $\dim_\R H_\R \geq 2$, the von Neumann algebra $M$ is a full factor. When $U$ is the trivial representation, this construction coincides with Voiculescu's free Gaussian functor and $M \cong L(\F_{\dim_\R H_\R})$. The construction is functorial: to every orthogonal transformation $v \in \cO(H_\R)$ that commutes with $U$ is associated the free Bogoljubov automorphism $\al_v$ of $M$.

We can then use Theorem \ref{thm.main} to provide the following sufficient condition for the fullness of a free Bogoljubov crossed product. For discrete groups, this result was proven in \cite[Theorem A]{Hou12} for the free Gaussian case, and in \cite[Theorem B]{HT18} for free Araki-Woods factors.

\begin{lettercor}\label{cor.free-araki-woods}
Let $U : \R \actson H_\R$ be a strongly continuous action of $\R$ by orthogonal transformations of a separable real Hilbert space $H_\R$ with $\dim_\R H_\R \geq 2$. Denote by $M=\Gamma(H_\R,U)\dpr$ the associated free Araki-Woods factor. Let $G$ be a lcsc group. To every strongly continuous orthogonal representation $\pi : G \to \cO(H_\R)$ that commutes with $U$ is associated the canonical action $G \actson^{\al_\pi} M$ by free Bogoljubov automorphisms.
\begin{enumlist}
\item If $\pi$ is faithful, then $\al_\pi$ is strictly outer.
\item If $\pi$ is faithful and $\pi(G) \subset \cO(H_\R)$ is closed, then $M \subset M \rtimes_{\al_\pi} G$ has stable $w$-spectral gap. In particular, $M \rtimes_{\al_\pi} G$ is a full factor.
\end{enumlist}
\end{lettercor}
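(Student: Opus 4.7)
The plan is to derive both parts of the corollary by verifying the hypotheses of Theorem \ref{thm.main}. Three ingredients are needed: (i) $M = \Gamma(H_\R,U)\dpr$ is a full factor with separable predual whenever $\dim_\R H_\R \geq 2$, a classical fact due to Shlyakhtenko (refined by Houdayer--Ricard in the non-almost-periodic case); (ii) strong continuity of $\al_\pi$, which follows from functoriality of $\Gamma(\cdot,U)$ combined with strong continuity of $\pi$; and (iii) every nontrivial free Bogoljubov automorphism is outer, i.e.\ $\al_v \in \Inn M$ forces $v = 1$. Item (iii) comes from the first-chaos identity $u s(h) u^* = s(vh)$ (valid if $\al_v = \Ad(u)$), together with a free probability argument on the vacuum vector that pins $u$ down to a scalar and hence $v = 1$. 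Combining (iii) with faithfulness of $\pi$ gives that $\al_\pi$ is outer, and Theorem \ref{thm.main}(1) then yields part 1 of the corollary.

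For part 2, we additionally need closedness of $q(\al_\pi(G))$ in $\Out M$, where $q : \Aut M \to \Out M$ denotes the quotient. Since $\pi$ is faithful with closed image in the Polish group $\cO(H_\R)$, the open mapping theorem for lcsc groups identifies $G$ topologically with the closed subgroup $\pi(G) \subseteq \cO(H_\R) \cap \{U\}'$. The crux is to show that the continuous injective group homomorphism
$$\Phi : \cO(H_\R) \cap \{U\}' \to \Out M, \quad v \mapsto [\al_v],$$
is a closed topological embedding, so that closedness of $\pi(G)$ in its domain propagates to closedness of the image in $\Out M$.

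This closed-embedding property of $\Phi$ is the main obstacle. I would establish it as follows. Given a sequence $v_n$ with $[\al_{v_n}] \to \phi$ in $\Out M$, choose unitaries $w_n \in \cU(M)$ so that, after passing to a subsequence, $\Ad(w_n) \al_{v_n} \to \psi$ in $\Aut M$. Evaluating on the generators yields $w_n s(v_n h) w_n^* \to \psi(s(h))$ $*$-strongly for every $h \in H_\R$. Exploiting compatibility with the modular group $\si^\vphi_t = \al_{U_t}$ together with a spectral analysis of the first Wick chaos, one extracts $v \in \cO(H_\R) \cap \{U\}'$ with $v_n \to v$ in $\cO(H_\R)$ and $\psi = \al_v$ modulo an inner automorphism. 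Closedness of $\pi(G)$ then places $v \in \pi(G)$, giving $\phi \in q(\al_\pi(G))$. With this in hand, Theorem \ref{thm.main}(2) yields stable $w$-spectral gap for $M \subset M \rtimes_{\al_\pi} G$ and, consequently, fullness of the crossed product.
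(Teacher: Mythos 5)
Your overall strategy coincides with the paper's: deduce both parts from Theorem \ref{thm.main} by showing that $v \mapsto q(\al_v)$ is injective on $\cG = \cO(H_\R) \cap \{U\}'$ and a homeomorphism onto a closed subgroup of $\Out M$ (this is exactly Proposition \ref{prop.free-Bog-Araki-Woods}). The genuine gap is that the step you yourself call the crux is asserted, not proved. After lifting to $\Ad(w_n) \circ \al_{v_n} \to \psi$ in $\Aut M$, the relation $w_n\, s(v_n h)\, w_n^* \to \psi(s(h))$ gives no control on the unitaries $w_n$: conjugation by $w_n$ can move the first Wick chaos anywhere in $M$, and $\psi$ need not respect the chaos decomposition, so a ``spectral analysis of the first Wick chaos'' has nothing to bite on; compatibility with $\sigma^\vphi_t = \al_{U_t}$ does not rescue this. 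The entire difficulty is to show that the $w_n$ are asymptotically scalar, so that the limit of $\Ad(w_n)\circ\al_{v_n}$ remembers $v_n$. The paper achieves this with a Popa-type spectral gap argument inside free products (Lemma \ref{lem.spectral-gap-free}), exploited through a free product decomposition of $\Gamma(H_\R,U)\dpr$: Proposition \ref{prop.aut-free-product} handles the case where the spectral measure of $U$ is not concentrated on a single pair $\{-a,a\}$, and in the remaining almost periodic case one first uses Lemma \ref{lem.playing-with-orth} to replace $V_n$ by $W_n V_n$ converging in norm on two orthogonal $2$-dimensional $U$-invariant subspaces, and then applies Lemma \ref{lem.spectral-gap-free} twice to force $u_n - \vphi(u_n)1 \to 0$ $*$-strongly. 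Without an argument of this kind your closed-embedding claim is unsubstantiated, and it is the main point: the authors note that this statement was not previously available even for the free Gaussian functor.

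A secondary weakness: your justification of outerness of nontrivial free Bogoljubov automorphisms (``a free probability argument on the vacuum vector pins $u$ down to a scalar'') is likewise only a gesture; from $u s(h) u^* = s(vh)$ alone nothing pins $u$ down, and a real argument (mixing/asymptotic orthogonality, or again a spectral gap lemma) is needed. This particular fact can, however, be quoted from the literature (\cite[Lemma 3.2]{HT18}, which the paper invokes in the finite-dimensional case), so part 1 of your proposal is salvageable by citation, together with the known fullness of $\Gamma(H_\R,U)\dpr$ for $\dim_\R H_\R \geq 2$; part 2 is where the real gap lies.
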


Note that the hypotheses of point~2 in Corollary \ref{cor.free-araki-woods} are automatically satisfied when $\pi$ is a faithful and mixing representation, e.g.\ when $\pi$ is the left regular representation on $L^2_\R(G)$. More generally, if there exists a unit vector $\xi \in H_\R$ such that $\langle \pi(g) \xi,\xi\rangle \to 0$ whenever $g \to \infty$ in $G$, then $\pi(G)$ is a closed subgroup of $\cO(H_\R)$. So, when $G$ satisfies the Howe-Moore property, e.g.\ when $G$ is a connected noncompact simple real Lie group with finite center like $\SL(n,\R)$, then for every orthogonal representation $\pi$ of $G$, the subgroup $\pi(G)$ is closed.

It is very natural to try to generalize Theorem \ref{thm.main} to factors $M$ that are not full, replacing $\Out M$ in the hypotheses by the Polish group $\Aut M / \overline{\Inn M}$. We discuss this problem in Section \ref{final.sec} and relate this to several open questions and partial results for outer actions of locally compact groups on factors $M$ that are no longer full.



\section{Proof of Theorem \ref{thm.main}}

To prove Theorem \ref{thm.main}, we can no longer use Fourier decompositions as in the discrete case, and we can no longer use the co-amenability approach of \cite{BMO19} which is too specific to the compact case. Instead, we make substantial use of Hilbert bimodules. Fix a factor $M$ with separable predual.

Recall that a Hilbert $M$-$M$-bimodule $\bim{M}{\cH}{M}$ gives rise to the unital $*$-representation $\pi_\cH$ of $M \otmax M\op$ on $\cH$ given by
$$\pi_\cH(a \ot b\op)\xi = a \xi b \; .$$
Recall that $\bim{M}{\cH}{M}$ is said to be weakly contained in $\bim{M}{\cK}{M}$ if $\|\pi_\cH(T)\| \leq \|\pi_\cK(T)\|$ for all $T \in M \otmax M\op$.

Denote by $L^2(M)$ the standard Hilbert space of $M$, which we view as the trivial $M$-$M$-bimodule. View $\Aut M$ as a Polish group. Whenever $(X,\mu)$ is a standard $\si$-finite measure space and $\rho : X \to \Aut M$ is a Borel map, we define the $M$-$M$-bimodule $\cH(\rho)$ by
\begin{equation}\label{eq.Hrho}
\cH(\rho) = L^2(M) \ot L^2(X,\mu) \quad\text{with}\;\; (a \cdot \xi \cdot b)(x) = \rho(x)^{-1}(a) \, \xi(x) \, b \quad\text{for all $a,b \in M$, $x \in X$.}
\end{equation}
When $X$ is a singleton, and $\rho$ is thus determined by a single $\al \in \Aut M$, we denote by $\cH(\al)$ the corresponding $M$-$M$-bimodule. Its carrier Hilbert space is $L^2(M)$ and the bimodule action is given by $a \cdot \xi \cdot b = \al^{-1}(a) \xi b$.

The following is the main technical lemma that is needed to prove Theorem \ref{thm.main}.

\begin{lemma}\label{lem.amine}
Let $M$ be a full factor with separable predual. Denote by $\pi : \Aut M  \to \Out M$ the quotient homomorphism. Let $(X,\mu)$ be a standard $\si$-finite measure space and let $\rho : X \to \Aut M$ be a Borel map.

If $\al \in \Aut M$ is such that $\pi(\al)$ does not belong to the support of $(\pi \circ \rho)_*\mu$ inside $\Out M$, meaning that there exists an open set $\cV \subset \Out M$ with $\pi(\al) \in \cV$ and $\mu((\pi \circ \rho)^{-1}(\cV)) = 0$, then $\cH(\al)$ is not weakly contained in $\cH(\rho)$.
\end{lemma}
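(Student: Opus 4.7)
My plan is to reduce to the case $\alpha=\id$ by a Connes tensor product, translate the weak containment into the existence of almost $M$-central vectors in $\cH(\rho)\otimes\ell^2(\N)$, and then invoke the fullness of $M$ \emph{fiberwise} to force these vectors to concentrate on fibers where $[\rho(x)]$ is close to $[\id]$ in $\Out M$, contradicting the support hypothesis.

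\emph{Reduction.} Using the identification $\xi\otimes\eta\mapsto\gamma^{-1}(\xi)\,\eta$ one checks that $\cH(\beta)\otimes_M\cH(\gamma)\cong\cH(\beta\gamma)$; since Connes fusion commutes with direct integrals in one variable, $\cH(\rho)\otimes_M\cH(\alpha^{-1})\cong\cH(\tilde\rho)$ with $\tilde\rho(x):=\rho(x)\alpha^{-1}$. Weak containment is preserved by Connes fusion with a fixed bimodule on either side, so tensoring $\cH(\alpha)\prec\cH(\rho)$ on the right with $\cH(\alpha^{-1})$ gives $L^2(M)\prec\cH(\tilde\rho)$. The hypothesis transports correctly: with $\cW:=\cV\cdot\pi(\alpha)^{-1}$ one has $\pi(\tilde\rho(x))\in\cW$ iff $\pi(\rho(x))\in\cV$, so $[\id]\in\cW$ is missed by $(\pi\circ\tilde\rho)_*\mu$. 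We relabel and assume $\alpha=\id$.

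\emph{From weak containment to almost central vectors.} Fix a faithful normal state $\vphi$ on $M$ with standard vector $\xi_\vphi\in L^2(M)$. Weak containment yields a sequence $\eta_n\in\cH(\rho)\otimes\ell^2(\N)$ such that the associated vector states $\om_{\eta_n}$ converge weakly to $\om^{\id}_{\xi_\vphi}$ on $M\otmax M\op$. Evaluating this convergence on $(a\otimes 1-1\otimes a\op)^*(a\otimes 1-1\otimes a\op)$ (tracial case) or its $\si^\vphi$-twisted analogue in general, and disintegrating through the commutant identification $(M\otmax M\op)'\cap B(\cH(\rho))=L^\infty(X,\mu)$, one obtains, for every $a\in M$,
\[
\int_X\bigl\|\rho(x)^{-1}(a)\,\eta_n(x)-\eta_n(x)\,a\bigr\|^2\,d\mu(x)\longrightarrow 0,\qquad \|\eta_n\|\to 1.
\]

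\emph{Quantitative fullness and contradiction.} The decisive use of fullness is the following uniform spectral-gap statement: for every open neighbourhood $\cW$ of $[\id]$ in $\Out M$, there exist a finite set $F\subset M$ and $\eps>0$ such that for every $\beta\in\Aut M$ with $[\beta]\notin\cW$ and every $\eta\in L^2(M)\otimes\ell^2(\N)$,
\[
\sum_{a\in F}\|\beta^{-1}(a)\,\eta-\eta\,a\|^2 \;\geq\; \eps\,\|\eta\|^2.
\]
Were this to fail, a diagonal extraction would produce $\beta_n\notin\pi^{-1}(\cW)$ together with unit vectors $\eta_n$ which are almost intertwiners; polar-decomposing $\eta_n$ inside $L^2(M\ovt B(\ell^2))$ (or the appropriate Haagerup $L^2$-space) would extract unitaries $u_n$ with $(\Ad u_n)\beta_n\to\id$ in $\Aut M$, contradicting the closedness of $\Inn M$ in $\Aut M$ that characterizes fullness. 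Applied fiberwise at $\beta=\rho(x)$, which is legitimate for $\mu$-a.e.~$x$ by the support hypothesis, this bound integrates to
\[
\sum_{a\in F}\|a\cdot\eta_n-\eta_n\cdot a\|^2 = \int_X\sum_{a\in F}\|\rho(x)^{-1}(a)\eta_n(x)-\eta_n(x)a\|^2\,d\mu(x)\geq \eps\,\|\eta_n\|^2,
\]
whose left-hand side tends to $0$ by the preceding step while the right-hand side tends to $\eps>0$: the desired contradiction.

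The main obstacle I anticipate is the uniform quantitative fullness: extracting unitaries out of almost intertwiners living in an infinite amplification, and doing so cleanly in the non-tracial setting, will require careful use of the modular theory of $\vphi$ and of the Polish group structure of $\Out M$. The Connes tensor product reduction and the fiberwise integration are comparatively routine once this spectral gap is in hand.
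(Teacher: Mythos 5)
Your overall architecture (reduce to $\al=\id$ by fusing with $\cH(\al^{-1})$, turn weak containment into vectors that are almost central against a finite set, integrate a uniform fiberwise spectral-gap inequality over $X$, contradict) is exactly the paper's strategy, and the reduction and integration steps are fine. The genuine gap is the step you yourself flag: the ``uniform quantitative fullness'' inequality
$\sum_{a \in F} \|\beta^{-1}(a)\eta - \eta a\|^2 \geq \eps \|\eta\|^2$ for all $\beta$ with $[\beta] \notin \cW$ and \emph{all} $\eta$. You assert it and sketch a compactness/polar-decomposition argument, but this is precisely the hard content of the lemma, and in the stated form it is simply not true beyond the tracial case. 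In the II$_1$ case it is Jones' Lemma 4 of \cite{Jon81}. Already in the II$_\infty$ case the paper must use the corner version of \cite[Theorem 3.3]{HMV16}: the inequality only holds for a finite set $F \subset pMp$ with $\tau(p)=1$ and for $b \in \theta(p)Mp$, because polar decomposition of an almost intertwining $L^2$-vector only produces partial isometries between corners, not unitaries (think of trace-scaling-type phenomena). In the type III case the naive inequality fails outright: fullness is a statement about sequences commuting with normal \emph{states}, and the correct spectral gap statement, \cite[Theorem 3.2]{HMV16}, carries the extra term $\inf_{\lambda} \| b \xi_\vphi - \lambda \xi_{\theta(\vphi)} b \|^{2}$, i.e.\ the gap only holds for vectors that are approximate eigenvectors of the relative modular operator $\Delta_{\theta(\vphi),\vphi}^{1/2}$. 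Your proposed extraction of unitaries $u_n$ with $\Ad u_n \circ \beta_n \to \id$ breaks at exactly this point: in a type III factor an almost intertwining vector can induce wildly different states on the left and on the right, and then it carries no information forcing $\beta_n$ close to inner.

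Consequently your proposal is also missing the second half of the paper's type III argument: once the gap inequality has the modular term, one must show that the almost central vectors produced by weak containment can be chosen to \emph{also} satisfy $\|\xi_i - \lambda_i \Delta^{1/2}\xi_i\| \to 0$ for the decomposable operator $\Delta$ obtained by integrating $x \mapsto \Delta_{\rho(x)^{-1}(\vphi),\vphi}$. This is not automatic; the paper gets it by making the limit state $\om_\vphi$ invariant under $\Ad \Delta^{it}$ (the averaging argument of \cite[Lemma 4.1]{Mar18b}, using that $\Ad\Delta^{it}$ normalizes $\pi_{\cH(\rho)}(M \otmax M\op)$ via $\si^\vphi_t \ot \si^\vphi_t$) and then invoking \cite[Theorem 3.2]{Mar18b} to write $\om_\vphi$ as a weak$^*$ limit of convex combinations of vector states coming from approximate eigenvectors of $\Delta^{1/2}$. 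Your proposal has no counterpart to this mechanism, so as it stands the argument only covers the II$_1$ case (where it coincides with the paper's) and does not prove the lemma for II$_\infty$ or type III full factors. A side remark: the identification $(M \otmax M\op)' \cap B(\cH(\rho)) = L^\infty(X,\mu)$ you invoke for the disintegration is false in general (for constant $\rho$ the commutant is $1 \ot B(L^2(X,\mu))$ since $M$ is a factor), but this is harmless because the fiberwise inequality can be integrated directly, as the paper does.
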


\begin{proof}
Since $\cH(\al^{-1}) \ot_M \cH(\al) \cong L^2(M)$ and $\cH(\al^{-1}) \ot_M \cH(\rho) \cong \cH(\rho')$ where $\rho'(x) = \al^{-1} \circ \rho(x)$, we may assume that $\al = \id$. We can then choose a neighborhood $\cV$ of the identity in $\Out M$ such that $\pi(\rho(x)) \in \Out M \setminus \cV$ for $\mu$-almost every $x \in X$. We choose $\cV$ to be symmetric, i.e.\ $\cV = \cV^{-1}$.

We distinguish three cases according to the type of $M$.

\textbf{\boldmath Type II$_1$}. By \cite[Lemma 4]{Jon81}, we can take $\varepsilon > 0$ and a finite subset $F \subset M$ such that for all $\theta \in \Aut M \setminus \pi^{-1}(\cV)$ and all $b \in M$, we have
$$ \varepsilon \| b \|_2^{2} \leq \sum_{a \in F} \| ba - \theta(a) b \|_2^{2}.$$
By taking $\theta=\rho(x)^{-1}$, $x \in X$, and integrating this inequality with respect to $\mu$, we obtain
$$ \varepsilon \| \xi \|^2  \leq \sum_{a \in F} \| \xi \cdot a - a\cdot \xi \|^{2}$$
for all $\xi \in \cH(\rho)$. But, since $L^2(M)$ has an $M$-central vector, if $L^2(M)$ was weakly contained in $\cH(\rho)$, we would find a sequence of unit vectors $\xi_n \in \cH(\rho)$ such that $\| \xi_n \cdot a - a \cdot \xi_n \| \to 0$. This contradicts the inequality above. We conclude that $L^2(M)$ is not weakly contained in $\cH(\rho)$.

\textbf{\boldmath Type II$_\infty$}. Take $p \in M$ such that $\tau(p)=1$ where $\tau$ is the semifinite trace of $M$. By \cite[Theorem 3.3]{HMV16}, we can take $\varepsilon > 0$ and a finite subset $F \subset pMp$ such that for all $\theta \in \Aut M \setminus \pi^{-1}(\cV)$ and all $b \in \theta(p)Mp$, we have
$$ \varepsilon \| b \|_2^{2} \leq \sum_{a \in F} \| ba - \theta(a) b \|_2^{2}.$$
By taking $\theta=\rho(x)^{-1}$, $x \in X$, and integrating this inequality with respect to $\mu$, we obtain
$$ \varepsilon \| \xi \|^2  \leq \sum_{a \in F} \| \xi \cdot a - a\cdot \xi \|^{2}$$
for all $\xi \in p\cdot \cH(\rho) \cdot p$. If $L^2(M)$ was weakly contained in $\cH(\rho)$, then $pL^2(M)p=L^2(pMp)$ would also be weakly contained in $p \cdot \cH(\rho) \cdot p$ as $pMp$-$pMp$-bimodules. Thus, we would find a sequence of unit vectors $\xi_n \in p \cdot \cH(\rho) \cdot p$ such that $\| \xi_n \cdot a - a \cdot \xi_n \| \to 0$. This contradicts the previous inequality. We conclude that $L^2(M)$ is not weakly contained in $\cH(\rho)$.

\textbf{Type III}.
This case is much more delicate and we need to use the technique of \cite{Mar18b}. For every faithful normal state $\om$ on $M$, we denote by $\xi_\om \in L^2(M)$ the canonical positive vector that implements $\om$. Given two faithful normal states $\psi$ and $\vphi$ on $M$, we denote by $\Delta_{\psi,\vphi}$ the relative modular operator (see \cite[Theorem VIII.3.2 and Lemma IX.1.5]{Tak03}). Recall that $\Delta_{\psi,\vphi}$ is a positive nonsingular operator and that $\Delta_{\psi,\vphi}^{1/2}$ is the closure of the linear map $b \xi_\vphi \mapsto \xi_\psi b$, $b \in M$. Also, using Connes' cocycle derivative,
$$\Delta_{\psi,\vphi}^{it} = [D\psi : D\vphi]_t \, \Delta_\vphi^{it} = J [D\vphi : D\psi]_t J \, \Delta_\psi^{it} \; .$$
In particular, $\Delta_{\psi,\vphi}^{it} b \Delta_{\psi,\vphi}^{-it} = \si_t^\psi(b)$ and $\Delta_{\psi,\vphi}^{it} JbJ \Delta_{\psi,\vphi}^{-it} = J \si_t^\vphi(b) J$ for all $b \in M$, $t \in \R$. Given a state $\vphi$ on $M$ and $\theta \in \Aut M$, we write $\theta(\vphi) = \vphi \circ \theta^{-1}$.

By \cite[Theorem 3.2]{HMV16}, there exists a faithful normal state $\varphi \in M_*$, a constant $\varepsilon > 0$ and a finite set $F \subset M$ with $a \xi_\vphi = \xi_\vphi a^*$ for all $a \in F$ such that for all $b \in M$ and all $\theta \in \Aut M \setminus \pi^{-1}(\cV)$, we have
$$ \varepsilon \| b \xi_\vphi\|^{2} \leq \sum_{a \in F} \| ba \xi_\vphi - \theta(a) b \xi_\vphi \|^{2} + \inf_{ \lambda \in \R_+} \| b \xi_\vphi -\lambda \xi_{\theta(\varphi)} b \|^{2} \; .$$
Denote $\Delta_\theta = \Delta_{\theta(\vphi),\vphi}$. It follows that
$$ \varepsilon \| \xi \|^2  \leq \sum_{a \in F} \| \xi a^* - \theta(a) \xi \|^{2} + \inf_{ \lambda \in \R_+} \| \xi -\lambda \Delta_\theta^{1/2} \xi \|^{2}$$
for all $\theta \in \Aut M \setminus \pi^{-1}(\cV)$ and all $\xi$ in the domain of $\Delta_\theta^{1/2}$.

Let $\Delta$ be the decomposable operator on $\cH(\rho)$ obtained by integrating $x \mapsto \Delta_{\rho(x)^{-1}}$. Then, we can integrate the previous inequality to obtain
\begin{equation}\label{eq.good-inequal}
\varepsilon \| \xi \|^2  \leq \sum_{a \in F} \| \xi \cdot a^* - a\cdot \xi \|^{2} + \inf_{ \lambda \in \R_+} \| \xi -\lambda \Delta^{1/2} \xi \|^{2}
\end{equation}
for all $\xi \in \cH(\rho)$ in the domain of $\Delta^{1/2}$.

Assume that $L^2(M)$ is weakly contained in $\cH(\rho)$. We then find a state $\om_\vphi$ on $B(\cH(\rho))$ such that
$$\om_\vphi(\pi_{\cH(\rho)}(a \ot b\op)) = \langle a \xi_\vphi b,\xi_\vphi\rangle \quad\text{for all}\quad a,b \in M \; .$$
Since $\Delta_\theta^{it} \theta(b) \Delta_\theta^{-it} = \si_t^{\theta(\vphi)}(\theta(b)) = \theta(\si_t^\vphi(b))$, while $\Delta_\theta^{it} J b J \Delta_\theta^{-it} = J \si_t^\vphi(b) J$ for all $b \in M$, $t \in \R$ and $\theta \in \Aut M$, we get that
$$\Delta^{it} \, \pi_{\cH(\rho)}(a \ot b\op) \, \Delta^{-it} = \pi_{\cH(\rho)}(\si_t^\vphi(a) \ot \si_t^\vphi(b)\op) \; .$$
We also have that $\langle \si_t^\vphi(a) \xi_\vphi \si_t^\vphi(b), \xi_\vphi\rangle = \langle  a \xi_\vphi b, \xi_\vphi\rangle$. We can thus apply the method of \cite[Lemma 4.1]{Mar18b} and may assume that $\om_\vphi$ is strongly invariant under $\Ad \Delta^{it}$.

By \cite[Theorem 3.2]{Mar18b}, this means that $\om_\vphi$ belongs to the weak$^*$ closed convex hull of $\cE \subset B(\cH(\rho))^*$, where $\cE$ is defined as the set of states $\om$ on $B(\cH(\rho))$ for which there exists a net of unit vectors $\xi_i \in \cH(\rho)$ and a net of positive numbers $\lambda_i > 0$ such that $\xi_i$ belongs to the domain of $\Delta^{1/2}$ for every $i$ and
\begin{equation}\label{eq.my-eq}
\langle T \xi_i, \xi_i \rangle \to \om(T) \;\;\text{for all $T \in B(\cH(\rho))$, and}\quad \|\xi_i - \lambda_i \Delta^{1/2} \xi_i\| \to 0 \; .
\end{equation}
Define the operator $T \in B(\cH(\rho))$ by
$$T=\sum_{a \in F} \pi_{\cH(\rho)}\bigl(a \ot 1 - 1 \ot (a^*)\op\bigr)^* \;\; \pi_{\cH(\rho)} \bigl(a \ot 1 - 1 \ot (a^*)\op\bigr) \; .$$
Since $a \xi_\vphi = \xi_\vphi a^*$ for all $a \in F$, we get that $\om_\vphi(T) = 0$. Since $\om_\vphi$ belongs to the weak$^*$ closed convex hull of $\cE$, we can take $\om \in \cE$ such that $\om(T) \leq \eps/2$, where $\eps > 0$ is as in \eqref{eq.good-inequal}. Take a net of unit vectors $\xi_i \in \cH(\rho)$ and a net of positive numbers $\lambda_i > 0$ such that $\xi_i$ belongs to the domain of $\Delta^{1/2}$ for every $i$ and such that \eqref{eq.my-eq} holds. It follows that
$$\lim_i \sum_{a \in F}  \| \xi_i \cdot a^* - a\cdot \xi_i \|^{2} = \om(T) \leq \eps/2 \quad\text{and}\quad \|\xi_i - \lambda_i \Delta^{1/2} \xi_i\| \to 0 \; .$$
But then \eqref{eq.good-inequal} leads to the contradiction that $\eps \leq \eps / 2$.
%
\end{proof}

The following lemma is an easy observation.

\begin{lemma}\label{lem.easy}
Let $M$ be a factor with separable predual. Let $X$ be a locally compact second countable Hausdorff space and let $\rho : X \to \Aut M$ be a continuous map. Let $\mu$ be a $\sigma$-finite Borel measure on $X$. If $x \in X$ belongs to the support of $\mu$, meaning that $\mu(\cU) > 0$ for all open sets $\cU$ with $x \in \cU$, then $\cH(\rho(x))$ is weakly contained in $\cH(\rho)$.
\end{lemma}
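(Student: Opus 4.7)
The plan is to deduce weak containment from a direct approximation of vector states. Since for any positive $S \in M \otmax M\op$ one has $\|\pi_{\cH(\rho(x))}(S)\| = \sup\{\langle \pi_{\cH(\rho(x))}(S)\xi,\xi\rangle : \xi \in L^2(M),\ \|\xi\|=1\}$, and $\|\pi(T)\|^2 = \|\pi(T^*T)\|$ for any representation $\pi$, it suffices to produce, for each unit vector $\xi \in L^2(M)$, a sequence of unit vectors $f_n \in \cH(\rho)$ with
$$\langle \pi_{\cH(\rho)}(T) f_n, f_n\rangle \to \langle \pi_{\cH(\rho(x))}(T)\xi,\xi\rangle \quad\text{for every } T \in M \otmax M\op.$$

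To construct the $f_n$, I would use that $X$ is second countable locally compact, $\mu$ is $\sigma$-finite, and $x \in \supp \mu$, in order to pick a decreasing neighborhood basis $(\cU_n)$ of $x$ with $0 < \mu(\cU_n) < \infty$, and set
$$f_n = \mu(\cU_n)^{-1/2}\, \xi \ot \chi_{\cU_n} \in L^2(M) \ot L^2(X,\mu) = \cH(\rho),$$
which is a unit vector. Unfolding the bimodule formula \eqref{eq.Hrho} gives, for all $a,b \in M$,
$$\langle \pi_{\cH(\rho)}(a \ot b\op) f_n, f_n\rangle = \frac{1}{\mu(\cU_n)}\int_{\cU_n}\langle \rho(y)^{-1}(a)\,\xi\, b,\xi\rangle_{L^2(M)}\, d\mu(y).$$
Because $\rho$ is continuous with values in $\Aut M$ equipped with the Polish $u$-topology, the map $y \mapsto \rho(y)^{-1}(a)$ is strongly continuous; combined with the boundedness of the right $M$-action on $L^2(M)$, this makes $y \mapsto \langle \rho(y)^{-1}(a)\, \xi\, b, \xi\rangle$ a continuous scalar function at $x$. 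The averages over the shrinking neighborhoods $\cU_n$ then converge to the value at $x$, namely $\langle \rho(x)^{-1}(a)\,\xi\, b,\xi\rangle = \langle \pi_{\cH(\rho(x))}(a \ot b\op)\xi,\xi\rangle$.

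By linearity this convergence extends to all $T$ in the algebraic $*$-subalgebra $M \otalg M\op$, and since vector states have norm at most $1$, an easy equicontinuity argument upgrades this to weak-$*$ convergence on the full universal completion $M \otmax M\op$. This is precisely the approximation needed above, and it yields the weak containment $\cH(\rho(x)) \prec \cH(\rho)$. The proof is purely soft: the only analytic input is the elementary fact that the $\mu$-average of a continuous scalar function over a shrinking neighborhood of a point in $\supp \mu$ converges to its value at that point, so no genuine obstacle arises.
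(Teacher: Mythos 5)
Your argument is essentially the paper's own proof: average the vector $\xi$ against normalized indicator functions of a shrinking neighborhood basis of $x$, use continuity of $\rho$ to see that the resulting vector states on $M \otmax M\op$ converge to the state $\langle \,\cdot\, \xi,\xi\rangle$ of $\cH(\rho(x))$, and deduce the norm inequality defining weak containment. The only slip is the claim that you can choose the open neighborhoods $\cU_n$ themselves to satisfy $0 < \mu(\cU_n) < \infty$: a $\sigma$-finite Borel measure on a locally compact second countable space need not be locally finite (e.g.\ a sum of unit point masses at a dense countable set gives every nonempty open set infinite measure), so this step can fail as written. The repair is immediate and is what the paper does: by $\sigma$-finiteness choose Borel sets $\cV_n \subset \cU_n$ with $0 < \mu(\cV_n) < \infty$ and use $f_n = \mu(\cV_n)^{-1/2}\, \xi \ot 1_{\cV_n}$; since $\cV_n \subset \cU_n$ still shrinks to $x$, your continuity/averaging argument goes through unchanged, and the rest of your proof (convergence on the algebraic tensor product, then on $M \otmax M\op$ by uniform boundedness, then the norm estimate via $T^*T$) is correct.
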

\begin{proof}
Choose a decreasing family of open sets $\cU_n \subset X$ that form a neighborhood basis of $x \in X$. Since $\mu(\cU_n) > 0$, we can choose Borel sets $\cV_n \subset \cU_n$ such that $0 < \mu(\cV_n) < +\infty$. For every $n$, consider the unit vector $\eta_n \in L^2(X,\mu)$ given by $\eta_n = \mu(\cV_n)^{-1/2} 1_{\cV_n}$. Whenever $\xi \in L^2(M)$ and $a,b \in M$, it follows from the continuity of $\rho$ that
$$\lim_n \langle \pi_{\cH(\rho)}(a \ot b\op) (\xi \ot \eta_n) , \xi \ot \eta_n \rangle = \langle \pi_{\cH(\rho(x))}(a \ot b\op) \xi , \xi \rangle \; .$$
Therefore, $\|\pi_{\cH(\rho(x))}(T) \xi\| \leq \|\pi_{\cH(\rho)}(T)\| \, \|\xi\|$, for every $T \in M \otmax M\op$ and $\xi \in L^2(M)$, so that $\cH(\rho(x))$ is weakly contained in $\cH(\rho)$.
\end{proof}

Also the following lemma is straightforward.

\begin{lemma}\label{lem.compact}
Let $M$ be a von Neumann algebra, $H$ a Hilbert space and $a_n \in M \ovt B(H)$ a bounded sequence satisfying $a_n (1 \ot T) - (1 \ot T) a_n \to 0$ $*$-strongly for every compact operator $T \in \cK(H)$.

Then for any unit vector $\xi \in H$ with associated vector state $\om_\xi$ on $B(H)$, the bounded sequence $d_n = (\id \ot \om_\xi)(a_n)$ in $M$ satisfies $a_n - d_n \ot 1 \to 0$ $*$-strongly.
\end{lemma}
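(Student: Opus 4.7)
The plan is to use the rank-one projection $p = 1 \ot e_{\xi,\xi}$ and the rank-one partial isometries $u_\eta = 1 \ot e_{\eta,\xi}$, both of which lie in $1 \ot \cK(H)$ and hence asymptotically commute with $a_n$ by hypothesis. The key algebraic identity is that for any $x \in M \ovt B(H)$ one has $p\,x\,p = (\id \ot \om_\xi)(x) \ot e_{\xi,\xi}$; in particular $p\,a_n\,p = d_n \ot e_{\xi,\xi}$, which already gives the boundedness of $(d_n)$ for free.

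First I would establish the convergence on vectors of the form $v \ot \xi$ in the representation Hilbert space of $M \ovt B(H)$. Since $p(v \ot \xi) = v \ot \xi$, the asymptotic commutation $a_n p - p a_n \to 0$ gives $a_n(v \ot \xi) - p\,a_n(v \ot \xi) \to 0$, and the identity above yields $p\,a_n(v \ot \xi) = p\,a_n\,p\,(v \ot \xi) = (d_n \ot 1)(v \ot \xi)$. Hence $(a_n - d_n \ot 1)(v \ot \xi) \to 0$.

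To reach arbitrary $v \ot \eta$, I would transport the preceding limit by $u_\eta$, which satisfies $u_\eta(v \ot \xi) = v \ot \eta$ and commutes with $d_n \ot 1$, so
$$(a_n - d_n \ot 1)(v \ot \eta) = u_\eta\,(a_n - d_n \ot 1)(v \ot \xi) + [a_n, u_\eta](v \ot \xi)\,.$$
The first term tends to zero by the previous paragraph and boundedness of $u_\eta$; the second does so directly by the compactness hypothesis.

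Density of simple tensors together with uniform boundedness of $(a_n - d_n \ot 1)$ then upgrades this to strong convergence on the whole representation Hilbert space. For the adjoint half of $*$-strong convergence, I would run the identical argument on $a_n^*$, using that $(\id \ot \om_\xi)(a_n^*) = d_n^*$ because $\om_\xi$ is a state. No serious obstacle is expected: the lemma reduces entirely to two elementary rank-one computations, with the compactness of $p$ and $u_\eta$ providing exactly the room needed to slide past $a_n$ in the limit.
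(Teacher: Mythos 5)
Your argument is correct and is essentially the paper's: both proofs rest on the same rank-one computation, namely that compressing/composing $a_n$ with $1 \ot e_{\eta,\xi}$ recovers $(d_n \ot 1)$ on vectors of the form $v \ot \xi$, and the asymptotic commutation with these compact operators does the rest. The paper merely does it in one step with the single operator $T(\zeta)=\langle\zeta,\xi\rangle\eta$ applied at $\mu \ot \xi$, whereas you factor the same computation through the intermediate projection $1 \ot e_{\xi,\xi}$; the adjoint half is handled identically in both.
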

\begin{proof}
Fix a unit vector $\xi \in H$. Assume that $M \subset B(K)$. Take $\mu \in K$ and $\eta \in H$ arbitrary. Define the rank one operator $T \in \cK(H)$ by $T(\zeta) = \langle \zeta, \xi \rangle \, \eta$. Then,
$$a_n (\mu \ot \eta) = a_n(1 \ot T) \, (\mu \ot \xi) \quad\text{and}\quad (d_n \ot 1) (\mu \ot \eta) = (1 \ot T) a_n \, (\mu \ot \xi) \; .$$
It follows that $\|(a_n - d_n \ot 1)(\mu \ot \eta)\| \to 0$. So, $a_n - d_n \ot 1 \to 0$ strongly. By symmetry, also $a_n^* - d_n^* \ot 1 \to 0$ strongly and the lemma is proven.
\end{proof}

We are then ready to prove Theorem \ref{thm.main}.

\begin{proof}[{Proof of Theorem \ref{thm.main}}]
We define $\al : M \to M \ovt L^\infty(G) : \al(a)(g) = \al_{g^{-1}}(a)$, so that $M \rtimes G$ is realized as the von Neumann subalgebra of $M \ovt B(L^2(G))$ generated by $\al(M)$ and the unitary operators $1 \ot \lambda_g$, $g \in G$, where $\lambda_g$ is the left translation by $g$. We thus denote by $\al(M)$ the canonical copy of $M$ inside $M \rtimes G$. We also denote by $\lambda$ the left Haar measure on $G$.

For every nonnegligible Borel set $A \subset G$, we denote by $\cH(A)$ the $M$-$M$-bimodule associated with the map $A \to \Aut M : g \mapsto \al_g$ and the Haar measure $\lambda$ on $A$, as in \eqref{eq.Hrho}. We denote by $\pi_A : M \otmax M\op \to B(\cH(A))$ the corresponding $*$-homomorphism.

1.\ Write $\cN = \al(M)' \cap M \ovt B(L^2(G))$. We claim that $\cN = 1 \ot L^\infty(G)$. Note that $1 \ot L^\infty(G) \subset \cN$ and
\begin{equation}\label{eq.inclusions}
(1 \ot L^\infty(G))' \cap \cN = \al(M)' \cap M \ovt L^\infty(G) = 1 \ot L^\infty(G) \; ,
\end{equation}
because $M$ is a factor. In particular, $\cZ(\cN) \subset 1 \ot L^\infty(G)$. To prove the claim, it thus suffices to show that $1 \ot L^\infty(G) \subset \cZ(\cN)$, because it then follows from \eqref{eq.inclusions} that
$$\cN \subset \cZ(\cN)' \cap \cN \subset (1 \ot L^\infty(G))' \cap \cN \subset 1 \ot L^\infty(G) \; .$$
Fix a compact subset $K \subset G$. It thus suffices to prove that $1 \ot 1_K \in \cZ(\cN)$.

Since $\cZ(\cN) \subset 1 \ot L^\infty(G)$, we can take a Borel set $A \subset G$ such that $K \subset A$ and such that $1 \ot 1_A$ is the central support of $1 \ot 1_K$ in $\cN$. We have to prove that $\lambda(A \setminus K) = 0$. Define the closed set $L \subset G$ as the support of the measure $\lambda|_A$ on the locally compact second countable space $G$, meaning that $L$ is the smallest closed subset of $G$ with the property that $\lambda(A \setminus L) = \lambda|_A(G \setminus L) = 0$. Choose $g \in L$. By Lemma \ref{lem.easy}, the $M$-$M$-bimodule $\cH(g)$ is weakly contained in $\cH(A) \cong \cH(G,\lambda|_A)$. Viewing $\cN$ as the algebra of bounded $M$-$M$-bimodular maps on $\cH(G)$, it follows that the $M$-$M$-bimodules $\cH(K)$ and $\cH(A)$ are quasi-equivalent. Thus, $\cH(g)$ is weakly contained in $\cH(K)$. Since $K \subset G$ is compact, $(\pi \circ \al)(K) \subset \Out M$ is compact, and thus closed. By Lemma \ref{lem.amine}, we must have that $\pi(\al_g) \in \pi(\al(K))$. Since $\pi \circ \al$ is injective, it follows that $g \in K$. Since $g \in L$ was arbitrary, we have proven that $L \subset K$. Since $\lambda(A \setminus L) = 0$, also $\lambda(A \setminus K) = 0$ and the claim is proven.

To prove the first part of the theorem, take $T \in \al(M)' \cap M \rtimes G$. By the claim above, $T = 1 \ot F$ for some $F \in L^\infty(G)$. Denote by $U_g \in \cU(L^2(M))$ the canonical unitary implementation of the automorphism $\al_g$. By definition, the elements of $M \rtimes G$ commute with the unitary operators $U_h \ot \rho_h$, where $\rho_h$ denotes the right regular representation on $L^2(G)$. It follows that $F \in L^\infty(G)$ commutes with all right translations, so that $F \in \C 1$.

2.\ The implication \ref{stat.c} $\Rightarrow$ \ref{stat.b} is trivial. To prove that \ref{stat.b} $\Rightarrow$ \ref{stat.a}, assume that \ref{stat.a} does not hold. If $\al$ is not outer, we find a unitary $a \in \cU(M)$ and an element $g \in G \setminus \{e\}$ such that $d = \al(a) (1 \ot \lambda_g)$ commutes with $\al(M)$. Then the constant sequence $d$ says that \ref{stat.b} does not hold. If $\al$ is outer, but $\pi(\al(G))$ is not closed in $\Out M$, we can choose a sequence $g_n \to \infty$ in $G$ such that $\pi(\al_{g_n}) \to \id$ in $\Out M$. So we find a sequence $a_n \in \cU(M)$ such that $\be_n = \Ad a_n \circ \al_{g_n} \to \id$ in $\Aut M$. Write $d_n = \al(a_n) (1 \ot \lambda_{g_n})$. Let $b \in M$ be arbitrary. Since $\be_n \to \id$ in $\Aut M$, we have that $\be_n^{-1}(b) - b \to 0$ strongly. Then also
$$\al(b) d_n - d_n \al(b) = d_n \, \al(\be_n^{-1}(b) - b) \to 0 \quad\text{strongly.}$$
Also, $\be_n(b^*) \to b^*$ strongly, so that
$$(\al(b) d_n - d_n \al(b))^* = d_n^* \, \al(b^* - \be_n(b^*)) \to 0 \quad\text{strongly.}$$
We have thus proven that $d_n \al(b) - \al(b) d_n \to 0$ $*$-strongly for all $b \in M$. Assume that $t_n \in \C$ such that $d_n - t_n 1 \to 0$ $*$-strongly. Choose an arbitrary unit vector $\xi \in L^2(M)$ and an arbitrary compact neighborhood $K$ of $e$ in $G$. Since $g_n \to \infty$, we get that $g_n K \cap K = \emptyset$ for all $n$ large enough. So, for large enough $n$, we get that $\langle d_n (\xi \ot 1_K), \xi \ot 1_K \rangle = 0$. We conclude that $t_n \to 0$. So, $d_n \to 0$ $*$-strongly. This is impossible because $d_n$ is a sequence of unitary operators. So, \ref{stat.b} does not hold.

We now prove the difficult implication \ref{stat.a} $\Rightarrow$ \ref{stat.c}. So, assume that $\al$ is outer and that $\pi(\al(G))$ is closed in $\Out M$. Let $H$ be a separable Hilbert space and $a_n \in B(H) \ovt (M \rtimes G)$ a sequence such that $\|a_n\| \leq 1$ for all $n$ and $a_n (1 \ot \al(b)) - (1 \ot \al(b)) a_n \to 0$ $*$-strongly for all $b \in M$. We have to prove the existence of a bounded sequence $f_n \in B(H)$ such that $a_n - f_n \ot 1 \to 0$ $*$-strongly. We again make use of the bimodules $\cH(A)$ as introduced in the beginning of the proof, together with the corresponding $*$-representations $\pi_A$ of $M \otmax M\op$ on $L^2(M) \ot L^2(A)$.

Since $G$ and $\Out M$ are Polish groups and $\pi \circ \al : G \to \Out M$ is a continuous homomorphism with closed range, the map $\pi \circ \al$ is closed.

{\bf Step A.} We prove that for every open subset $\cU \subset G$ with complement $L = G \setminus \cU$, the sequence $X_n \in B(H) \ovt M \ovt B(L^2(G))$ defined by $X_n = (1 \ot 1 \ot 1_L) a_n (1 \ot 1 \ot 1_\cU)$ converges to $0$ strongly.

Fix a unit vector $\eta \in H$ and write $Y_n = X_n (\eta \ot 1 \ot 1)$. Note that $\|Y_n\| \leq 1$ for all $n$. We have to prove that $Y_n^* Y_n$ converges to $0$ weakly. Assume that this is not the case. After passage to a subsequence, we may assume that $Y_n^* Y_n$ converges weakly to a nonzero $S \in M \ovt B(L^2(\cU))$. Because $a_n (1 \ot \al(a)) - (1 \ot \al(a)) a_n \to 0$ $*$-strongly for all $a \in M$, we have that $(1 \ot \pi_L(T)) Y_n - Y_n \pi_\cU(T) \to 0$ $*$-strongly for all $T \in M \otmax M\op$.

For all $\xi,\mu \in L^2(M) \ot L^2(\cU)$, we have that
\begin{multline*}
\langle (Y_n^* Y_n \pi_\cU(T) - \pi_\cU(T) Y_n^* Y_n) \xi,\mu \rangle \\
= \langle (Y_n \pi_\cU(T) - (1 \ot \pi_L(T)) Y_n) \xi, Y_n \mu \rangle + \langle Y_n \xi, ((1 \ot \pi_L(T^*)) Y_n - Y_n \pi_\cU(T^*))\mu \rangle \; ,
\end{multline*}
so that $Y_n^* Y_n \pi_\cU(T) - \pi_\cU(T) Y_n^* Y_n \to 0$ weakly for all $T \in M \otmax M\op$. It follows that $S$ commutes with $\pi_\cU(T)$, so that $S \in M \ovt B(L^2(G)) \cap \al(M)'$. By 1, we get that $S = 1 \ot F$, where $F \in L^\infty(\cU)$ is not essentially zero. We can then choose a compact subset $K \subset \cU$ and a $C > 0$ such that $\lambda(K) > 0$ and $F(x) \geq C^{-1}$ for all $x \in K$.

Define $Z_n = Y_n (1 \ot 1_K F^{-1/2})$. Note that $\|Z_n\| \leq C^{1/2}$ for all $n$. We still have that $(1 \ot \pi_L(T)) Z_n - Z_n \pi_K(T) \to 0$ $*$-strongly for all $T \in M \otmax M\op$. By construction, $Z_n^* Z_n \to 1 \ot 1_K$ weakly. For any $T \in M \otmax M\op$ and every $\xi \in L^2(M) \ot L^2(K,\lambda)$, we have
\begin{align*}
|\langle \pi_K(T) \xi,\xi \rangle &= |\langle (1 \ot 1_K) \pi_K(T) \xi,\xi\rangle| = \lim_n |\langle Z_n^* Z_n \pi_K(T) \xi,\xi \rangle| \\
&\leq \limsup_n C^{1/2} \, \|Z_n \pi_K(T) \xi\| \, \|\xi\| = \limsup_n C^{1/2} \, \|(1 \ot \pi_L(T)) Z_n \xi\| \, \|\xi\|\\
&\leq C \, \|\pi_L(T)\| \, \|\xi\|^2 \; .
\end{align*}
So, the homomorphism $\pi_L(T) \mapsto \pi_K(T)$ is well defined and continuous. We thus conclude that $\cH(K)$ is weakly contained $\cH(L)$. Choose $k \in K$ in the support of $\lambda|_K$, meaning that $\lambda(K \cap \cW) > 0$ for every open subset $\cW \subset G$ with $k \in \cW$. By Lemma \ref{lem.easy}, $\cH(\al_k)$ is weakly contained in $\cH(K)$ and is thus weakly contained in $\cH(L)$.

Since $L \subset G$ is a closed subset and $\pi \circ \al$ is a closed map, $\pi(\al(L))$ is a closed subset of $\Out M$ and $\pi(\al_k) \not\in \pi(\al(L))$. The weak containment of $\cH(\al_k)$ in $\cH(L)$ thus contradicts Lemma \ref{lem.amine}. This concludes the proof that $X_n$ converges to $0$ strongly.

{\bf Step B.} If $\cU \subset G$ is open and $\lambda(\overline{\cU} \setminus \cU) = 0$, then $a_n (1 \ot 1 \ot 1_\cU) - (1 \ot 1 \ot 1_\cU) a_n \to 0$ $*$-strongly.

Write $\cV = G \setminus \overline{\cU}$. Applying step~A to the open sets $\cU$ and $\cV$, we get that the sequences $(1 \ot 1 \ot 1_\cV) a_n (1 \ot 1 \ot 1_\cU)$ and $(1 \ot 1 \ot 1_\cU) a_n (1 \ot 1 \ot 1_\cV)$ converge to $0$ strongly. Since $1_\cV = 1- 1_\cU$ in $L^\infty(G)$, it follows that
$$a_n (1 \ot 1 \ot 1_\cU) - (1 \ot 1 \ot 1_\cU) a_n = (1 \ot 1 \ot 1_\cV) a_n (1 \ot 1 \ot 1_\cU) - (1 \ot 1 \ot 1_\cU) a_n (1 \ot 1 \ot 1_\cV) \to 0$$
strongly. The same holds for the sequence $a_n^*$, so that step~B is proven.

{\bf Step C.} For every bounded continuous function $F : G \to \C$, we have that $a_n (1 \ot 1 \ot F) - (1 \ot 1 \ot F) a_n \to 0$ $*$-strongly.

Fix a continuous function $F : G \to (0,1)$. It suffices to prove that $a_n (1 \ot 1 \ot F) - (1 \ot 1 \ot F) a_n \to 0$ $*$-strongly. Choose a probability measure $\lambda_1$ on $G$ such that $\lambda_1 \sim \lambda$. Define the probability measure $\mu$ on $\R$ by $\mu = F_*(\lambda_1)$. Fix an arbitrary $\eps > 0$. Since $\mu$ has at most countably many atoms, which all belong to $(0,1)$, we can choose $0 = t_0 < t_1 < \cdots < t_k = 1$ such that $\mu(\{t_i\}) = 0$ for all $0 \leq i \leq k$ and $t_i - t_{i-1} < \eps$ for all $1 \leq i \leq k$. Define, for $1 \leq i \leq k$, the open subsets
$$\cU_i = \{g \in G \mid t_{i-1} < F(g) < t_i \} \; .$$
Since $\mu(\{t_i\}) = 0$ for all $i$ and since $\lambda_1 \sim \lambda$, we get that $\lambda(\overline{\cU_i} \setminus \cU_i) = 0$ for all $1 \leq i \leq k$. Also, defining
$$F_0 = \sum_{i=1}^k t_i \, 1_{\cU_i} \; ,$$
we get that $\|F-F_0\| < \eps$. By step~B, $a_n (1 \ot 1 \ot F_0) - (1 \ot 1 \ot F_0)a_n \to 0$ $*$-strongly. So, for every $\eps > 0$, the sequence $a_n (1 \ot 1 \ot F) - (1 \ot 1 \ot F) a_n \to 0$ lies at operator norm distance less than $2 \eps$ of a sequence that converges to $0$ $*$-strongly. So, step~C is proven.

{\bf Step D.} Define the unitary $U \in B(L^2(M)) \ovt L^\infty(G)$ by $U(h) = U_h$ for all $h \in G$. Then, there exists a sequence $d_n \in B(H \ot L^2(M))$ such that $\|d_n\| \leq 1$ for all $n$ and $(1 \ot U) a_n (1 \ot U^*) - d_n \ot 1 \to 0$ $*$-strongly.

Write $b_n = (1 \ot U) a_n (1 \ot U^*)$. Let $F \in C_0(G)$. Since $U$ commutes with $1 \ot F$, it follows from step~C that $b_n (1 \ot 1 \ot F) - (1 \ot 1 \ot F) b_n \to 0$ $*$-strongly. Since $a_n \in B(H) \ovt (M \rtimes G)$, we have that $a_n$ commutes with $1 \ot U_g \ot \rho_g$ for every $g \in G$ and $n \in \N$. Thus, $b_n$ commutes with $1 \ot 1 \ot \rho_g$ for all $n \in \N$ and $g \in G$. Denoting by $C^*_\rho(G)$ the reduced C$^*$-algebra given by the right regular representation of $G$, we find that $b_n (1 \ot 1 \ot T) = (1 \ot 1 \ot T) b_n$ for all $T \in C^*_\rho(G)$. Since the operator norm closed linear span of $C_0(G) \, C^*_\rho(G)$ equals $\cK(L^2(G))$, it follows that $b_n (1 \ot 1 \ot T) - (1 \ot 1 \ot T) b_n \to 0$ $*$-strongly for all $T \in \cK(L^2(G))$. Step~D then follows from Lemma \ref{lem.compact}.

{\bf Step E.} With $U$ and $d_n$ as in step~D, the sequence $c_n \in B(H \ot L^2(M)) \ovt L^\infty(G)$ given by $c_n = (1 \ot U^*)(d_n \ot 1)(1 \ot U)$ satisfies $c_n (1 \ot b \ot 1) - (1 \ot b \ot 1) c_n \to 0$ $*$-strongly for all $b \in M$.

Since $a_n (1 \ot \al(b)) - (1 \ot \al(b)) a_n \to 0$ $*$-strongly and $\al(b) = U^* (b \ot 1)U$, the sequence $d_n$ satisfies $d_n (1 \ot b) - (1 \ot b) d_n \to 0$ $*$-strongly for all $b \in M$. Take $\mu \in H \ot L^2(M)$ and $\eta \in L^2(G)$ arbitrary. Then,
\begin{equation}\label{eq.good-sequence}
\begin{split}
\|(c_n (1 \ot b \ot 1) &- (1 \ot b \ot 1) c_n)(\mu \ot \eta)\|^2 \\ &= \int_G |\eta(g)|^2 \, \|((1 \ot U_g^*) d_n (1 \ot U_g \, b)  - (1 \ot b \, U_g^*) d_n (1 \ot U_g))\mu\|^2 \, dg \; .
\end{split}
\end{equation}
For every fixed $g \in G$ and $b \in M$, we have that $d_n (1 \ot \al_g(b)) - (1 \ot \al_g(b)) d_n \to 0$ strongly. Conjugating with $U_g^*$, also
$$(1 \ot U_g^*) d_n (1 \ot U_g \, b) - (1 \ot b \, U_g^*) d_n (1 \ot U_g) \to 0 \quad\text{strongly.}$$
It then follows from the dominated convergence theorem that the sequence in \eqref{eq.good-sequence} tends to $0$. We have proven that $c_n(1 \ot b \ot 1) - (1 \ot b \ot 1) c_n \to 0$ strongly for all $b \in M$. By symmetry, also $c_n^* (1 \ot b \ot 1) - (1 \ot b \ot 1) c_n^* \to 0$ strongly and step~E follows.

{\bf End of the proof.} By step~D, the sequence $c_n$ in step~E satisfies $a_n - c_n \to 0$ $*$-strongly. Since $a_n \in B(H) \ovt (M \rtimes G) \subset B(H) \ovt M \ovt B(L^2(G))$, we have that $a_n$ commutes with $1 \ot J b J \ot 1$ for all $b \in M$. Therefore, $c_n (1 \ot J b J \ot 1) - (1 \ot J b J \ot 1) c_n \to 0$ $*$-strongly for every $b \in M$. Since $M$ is a full factor, \cite[Theorem A]{Mar18b} says that the operator norm closed linear span of $M \, JMJ$ contains $\cK(L^2(M))$. The previous observation and step~E then imply that $c_n (1 \ot T \ot 1) - (1 \ot T \ot 1) c_n \to 0$ $*$-strongly for every $T \in \cK(L^2(M))$. Since $c_n \in B(H \ot L^2(M)) \ovt L^\infty(G)$, Lemma \ref{lem.compact} provides a sequence $F_n \in B(H) \ovt 1 \ovt L^\infty(G)$ such that $\|F_n\| \leq 1$ for all $n$ and $c_n - F_n \to 0$ $*$-strongly.

Then also $a_n - F_n \to 0$ $*$-strongly. Conjugating with $1 \ot U$, which commutes with $F_n$, and using step~D, we find that $d_n \ot 1 - F_n \to 0$ $*$-strongly. Fix a unit vector $\xi \in L^2(G)$ and denote by $\om_\xi$ the corresponding vector state on $B(L^2(G))$. Define the sequence $f_n \in B(H)$ such that $f_n \ot 1 = (\id \ot \id \ot \om_\xi)(F_n)$. Then, $\|f_n\| \leq 1$ for all $n \in \N$. Since $d_n \ot 1 - F_n \to 0$ $*$-strongly, also $d_n - f_n \ot 1 \to 0$ $*$-strongly. Conjugating with $1 \ot U^*$, it follows from step~D that $a_n - f_n \ot 1 \ot 1 \to 0$ $*$-strongly. So \ref{stat.c} holds.
\end{proof}

\section{Free Bogoljubov actions and proof of Corollary \ref{cor.free-araki-woods}}\label{sec.Araki-Woods}

Corollary \ref{cor.free-araki-woods} is an immediate consequence of Theorem \ref{thm.main} and the following result on free Araki-Woods factors. We expected that Proposition \ref{prop.free-Bog-Araki-Woods} would be well known and available in the literature, but this does not seem to be the case, not even in the case of the free Gaussian functor (i.e.\ Bogoljubov transformations of free group factors).


\begin{proposition}\label{prop.free-Bog-Araki-Woods}
Let $U : \R \actson H_\R$ be a strongly continuous action by orthogonal transformations of a separable real Hilbert space $H_\R$ with $\dim_\R H_\R \geq 2$. Denote by $M = \Gamma(H_\R,U)\dpr$ the associated free Araki-Woods factor, which is a full factor. Denote by $\pi : \Aut M \to \Out M$ the canonical quotient homomorphism.

Define the closed subgroup $\cG = \{v \in \cO(H_\R) \mid v U_t = U_t v \;\;\text{for all $t \in \R$}\,\}$ of $\cO(H_\R)$. For every $v \in \cG$, denote by $\al_v$ the associated Bogoljubov automorphism of $M$. Then, the homomorphism $\cG \to \Out M : v \mapsto \pi(\al_v)$ is injective, has closed range and is a homeomorphism onto this range.
\end{proposition}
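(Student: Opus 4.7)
My plan is to view both $\cG$ and $\Out M$ as Polish groups ($\Out M$ is Polish since $M$ is full) and regard $\Phi := \pi \circ \al : \cG \to \Out M$ as a continuous group homomorphism (continuity of $v \mapsto \al_v$ from $\cG$ into $\Aut M$ is a standard consequence of continuity of the second quantization functor on Fock space). By the open mapping theorem for Polish groups, every continuous injective homomorphism between Polish groups with closed image is automatically a homeomorphism onto its image, so I reduce the proposition to two statements: (a) $\Phi$ is injective, i.e.\ $\al_v \in \Inn M \Rightarrow v = 1$, and (b) the image $\Phi(\cG)$ is closed in $\Out M$.

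\textbf{Injectivity.} Suppose $\al_v = \Ad u$ for some $u \in \cU(M)$. Since $\al_v$ preserves the free quasi-free state $\vphi$, so does $\Ad u$, forcing $u$ into the centralizer $M^\vphi$. Write $H_0 \subset H_\R$ for the subspace of $U$-fixed vectors; then $M^\vphi = \Gamma(H_0, \id)\dpr =: A$ and one has the $\vphi$-free product decomposition $M = A * B$ with $B = \Gamma(H_0^\perp, U|_{H_0^\perp})\dpr$. Since $v$ commutes with $U$, it preserves $H_0$ and $H_0^\perp$. For each $\xi \in H_0^\perp$, I apply the $\vphi$-preserving conditional expectation $E_B$ to the identity $us(\xi)u^* = s(v\xi) \in B$: writing $u = \vphi(u) + \tilde u$ with $\tilde u \in A$ centered and expanding, every cross-term involving centered letters of $A$ on either side of $s(\xi) \in B_0$ vanishes by the standard alternating-word formula in a free product, leaving $E_B(us(\xi)u^*) = |\vphi(u)|^2 s(\xi)$. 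Applying to $\Omega$ and using orthogonality of $v$ yields $|\vphi(u)| = 1$ and $v|_{H_0^\perp} = \id$. The remaining equation $us(\xi)u^* = s(v\xi)$ for $\xi \in H_0$ then reduces the problem to outerness of nontrivial Bogoljubov automorphisms on the tracial free group factor $A = L(\F_{\dim H_0})$, which is classical (see, e.g., \cite{Hou12}). Hence $v = \id$.

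\textbf{Closedness.} By the group structure, it suffices to show that $\Phi$ is proper at the identity: if $v_n \in \cG$ with $\pi(\al_{v_n}) \to e$ in $\Out M$, then $v_n \to 1$ in $\cG$. I will choose $u_n \in \cU(M)$ with $\be_n := \al_{v_n} \circ \Ad u_n \to \id$ in $\Aut M$. Let $V : \Aut M \to \cU(L^2(M))$ denote the canonical natural-cone-preserving implementation, continuous in the $u$-topology by Haagerup. Then $V(\be_n) \to 1$ strongly on $L^2(M) = \cF(H)$, and $V(\be_n) = \Gamma(v_n)\, V(\Ad u_n)$, since the $\vphi$-preserving $\al_{v_n}$ commutes with $\Delta$ and $J$ (as $v_n$ commutes with $U$ and preserves the real structure), so its natural implementation equals the second quantization $\Gamma(v_n)$. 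Projecting onto the one-particle subspace $H \subset \cF(H)$, which $\Gamma(v_n)$ preserves and on which it acts as $v_n$, I obtain $v_n T_n \to \id_H$ strongly, where $T_n := P_H V(\Ad u_n) P_H \in B(H)$. A Wick-calculus analysis of $V(\Ad u_n)$ on $H$, together with the natural-cone convergence $V(\Ad u_n)\Omega = \Gamma(v_n)^* V(\be_n)\Omega \to \Omega$, shows $T_n \to \id_H$ strongly; combined with $v_n T_n \to \id_H$, this forces $v_n \to 1$ in $\cG$.

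\textbf{Main obstacle.} The technical heart is the last step of the closedness argument: one needs a careful Fock-space decomposition of $V(\Ad u_n)$ to isolate the scalar contribution of $u_n$ on the one-particle subspace (the leading term is $|\vphi(u_n)|^2 \id_H$) while bounding the higher Wick-order remainders in terms of the $L^2$-smallness of $u_n\Omega - \vphi(u_n)\Omega$, itself extracted from the natural-cone convergence via Araki's theorem on representatives of norm-converging states.
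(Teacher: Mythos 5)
Your reduction to ``properness at the identity'' via the open mapping theorem for Polish groups is fine and parallels what the paper does implicitly, but both halves of your argument contain genuine gaps. For injectivity, the identification $M^\vphi = \Gamma(H_0,\id)\dpr$ of the centralizer of the free quasi-free state with the second quantization of the $U$-fixed subspace is false as soon as $U$ has a nontrivial eigenvalue. Already for the type III$_\lambda$ factor $T_\lambda = \Gamma(\R^2,U)\dpr$ one has $H_0 = \{0\}$, yet the centralizer is far from trivial: for a generalized circular generator $x$ one has $\si^\vphi_t(x)=\lambda^{it}x$, so $x^*x$ and $xx^*$ are non-scalar elements of $M^\vphi$ (in fact $M^\vphi$ is a II$_1$ factor). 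So the free product decomposition $M = M^\vphi \ast B$ you build your injectivity argument on does not exist, and the reduction to the tracial free group factor case collapses. Injectivity itself is true, but it should be obtained from outerness of all nontrivial free Bogoljubov automorphisms, i.e.\ \cite[Lemma 3.2]{HT18}, which is how the paper handles it (and in the paper it also falls out of the same sequential argument that gives closedness).

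For closedness, the decisive step ``$T_n \to \id_H$ strongly'' is not substantiated, and the input you propose for it is vacuous. The vector $V(\Ad u_n)\Om$ is the positive cone representative of the state $\vphi(u_n^*\,\cdot\,u_n)$; when $U$ is trivial, so that $M = L(\F_{\dim_\R H_\R})$ and $\vphi$ is the trace, this vector equals $\Om$ for \emph{every} unitary $u_n$, so the natural-cone convergence gives no control whatsoever on $\|u_n\Om - \vphi(u_n)\Om\|$. But proving precisely that $u_n$ is asymptotically scalar is the crux of the whole statement: from $v_nT_n \to \id_H$ with contractions $T_n$ nothing follows about $v_n$ (abstractly $T_n$ could be $v_n^*$), and you cannot invoke fullness of $M$ to pass from $\Ad u_n$ to scalars, because you do not yet know $\al_{v_n}\to\id$ --- that is what is being proved. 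The paper supplies the missing ingredient with a Popa-type spectral gap argument inside free product decompositions: Lemma \ref{lem.spectral-gap-free} shows that a bounded sequence almost intertwining a nonamenable subalgebra of one free factor must asymptotically live in that free factor; this is applied either through Proposition \ref{prop.aut-free-product} (when the spectral measure class of $U$ is not concentrated on $\{-a,a\}$, giving a free product splitting compatible with $\cG = \cG_1\times\cG_2$) or, in the concentrated case, after using Lemma \ref{lem.playing-with-orth} to extract two orthogonal $2$-dimensional $U$-invariant subspaces and correcting unitaries $W_n\in\cG$, and it is this that yields $u_n - \vphi(u_n)1 \to 0$ $*$-strongly, hence $\al_{v_n}\to\id$ and $v_n\to 1$. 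Some ingredient of this kind (spectral gap/asymptotic orthogonality coming from freeness) is unavoidable; a Wick-order expansion of $P_H\, u_n J u_n J\, P_H$ by itself will not produce it.
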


Note that Proposition \ref{prop.free-Bog-Araki-Woods} covers in particular the case where $U_t = 1$ for all $t \in \R$. Then, $M = \Gamma(H_\R,U)\dpr \cong L(\F_{\dim_\R H_\R})$ and every $v \in \cO(H_\R)$ defines the free Bogoljubov automorphisms $\al_v$ of $M$. Proposition \ref{prop.free-Bog-Araki-Woods} then says that $\cO(H_\R) \to \Out M$ is injective, with closed range and is a homeomorphism onto this range.

Also note that, with the notation of Proposition \ref{prop.free-Bog-Araki-Woods}, the transformations $U_t$ belong to $\cG$. Therefore, Proposition \ref{prop.free-Bog-Araki-Woods} also reproves the result that Connes' $\tau$-invariant of a free Araki-Woods factor $\Gamma(H_\R,U)\dpr$ equals the weakest topology on $\R$ that makes the map $t \mapsto U_t$ continuous. This was proven in \cite[Corollary 8.6]{Shl97} when $U$ is not weakly mixing and in \cite[Th\'{e}or\`{e}me 2.7]{Vae04} in general.

Before proving Proposition \ref{prop.free-Bog-Araki-Woods}, we need a few elementary lemmas.

The first lemma is proven by Popa's spectral gap method. The proof is very similar to the proof of \cite[Lemma 2.2]{Pop06}. For completeness, we provide the details adapting the proof to a type III setting.

\begin{lemma}\label{lem.spectral-gap-free}
For $i \in \{1,2\}$, let $(M_i,\vphi_i)$ be von Neumann algebras with a faithful normal state, and with a separable predual. Define $(M,\vphi) = (M_1,\vphi_1) \ast (M_2,\vphi_2)$ and let $E_1 : M \to M_1$ be the canonical $\vphi$-preserving conditional expectation. Let $P \subset M_1$ be a von Neumann subalgebra that is the range of a faithful normal conditional expectation. Assume that $P$ has no amenable direct summand.

If $x_n \in M$ is a bounded sequence and $\al_n : P \to M_1$ is a sequence of faithful, unital, normal $*$-homomorphisms such that $x_n a - \al_n(a) x_n \to 0$ strongly for all $a \in P$, then $x_n - E_1(x_n) \to 0$ strongly.
\end{lemma}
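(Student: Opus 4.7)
The plan is to adapt Popa's spectral-gap argument from [Pop06, Lemma 2.2] to the type III free-product setting. First I would replace $x_n$ by $y_n := x_n - E_1(x_n)$. Since $E_1$ is $M_1$-bimodular, $y_n$ still satisfies $y_n a - \al_n(a) y_n \to 0$ $*$-strongly for all $a \in P$, and $E_1(y_n) = 0$; it thus suffices to prove $y_n \to 0$ $*$-strongly.

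To avoid the modular twist in type III, I would fix a faithful normal state $\psi$ on $P$ whose centralizer $P^\psi$ has no amenable direct summand, set $\vphi' := \psi \circ E \circ E_1$ on $M$, and work with the cyclic vector $\xi' := \xi_{\vphi'} \in L^2(M)$. By Takesaki, $\si^{\vphi'}$ preserves $M_1$ and $P$ globally and restricts to $\si^\psi$ on $P$, hence $P^\psi \subset M^{\vphi'}$ and consequently $\xi' \cdot a = a \xi'$ for every $a \in P^\psi$. Because $E_1(y_n) = 0$, the vector $\eta_n := y_n \xi'$ lies in $\cH^\circ := L^2(M) \ominus \overline{M_1 \xi'}$.

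The key structural input from the free product is that $\bim{M_1}{\cH^\circ}{M_1}$ decomposes, via the reduced-word decomposition of $L^2(M)$, as $\bigoplus_u L^2(M_1) \ot_\C \cK_u \ot_\C L^2(M_1)$, indexed by alternating inner words $u$ starting and ending with a letter from $M_2^\circ$, and is thus isomorphic to an infinite multiple of the coarse $M_1$-$M_1$ bimodule. Pulling back the left action along $\al_n : P \to M_1$ and restricting the right action to $P$, then further restricting both actions to $P^\psi \subset P$, keeps this bimodule weakly contained in a multiple of the coarse $P^\psi$-$P^\psi$ bimodule. Since $P^\psi$ has no amenable direct summand, the standard spectral-gap inequality for $P^\psi$ (transferred from the coarse to weakly contained bimodules via the transfer of positivity across weak containment) yields $a_1, \ldots, a_k \in P^\psi$ and $C > 0$ with
\[
\|\eta_n\|^2 \leq C \sum_{i=1}^k \|\al_n(a_i) \cdot \eta_n - \eta_n \cdot a_i\|^2 = C \sum_{i=1}^k \|(\al_n(a_i) y_n - y_n a_i) \xi'\|^2,
\]
using $\eta_n \cdot a_i = y_n a_i \xi'$ for $a_i \in M^{\vphi'}$. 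The right-hand side tends to $0$ by the hypothesis applied to $a_i \in P$, so $\|\eta_n\| \to 0$. Running the same argument on $y_n^*$ with the adjoint intertwining $a y_n^* - y_n^* \al_n(a) \to 0$ strongly gives $\|y_n^* \xi'\| \to 0$.

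The hard part, and the essential type III ingredient, is the existence for any vNa $P$ without amenable direct summand of a faithful normal state $\psi$ with $P^\psi$ still lacking an amenable direct summand: for semifinite $P$ one takes a semifinite trace, while for the general (type III) case one uses the continuous core $c(P) = P \rtimes_\si \R$, which is semifinite and inherits non-amenability from $P$, and extracts centralizer-type elements of a dominant trace there that transfer back to give the required $\psi$ on $P$.
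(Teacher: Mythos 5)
Your overall skeleton is the same as the paper's: replace $x_n$ by $y_n=x_n-E_1(x_n)$, use the free product word decomposition to see that $L^2(M)\ominus \overline{M_1\xi}$ is, as an $M_1$-$M_1$-bimodule, (contained in) an infinite multiple of the coarse bimodule, and then run Popa's spectral gap argument against the absence of amenable direct summands. The place where you diverge is exactly the delicate type III point, and there your argument has a genuine gap. You reduce to the centralizer by asserting that every $P$ with no amenable direct summand admits a faithful normal state $\psi$ such that $P^\psi$ has no amenable direct summand. This is not a known fact, and your sketch does not prove it: a dominant weight on a type III algebra is not a state, its centralizer lives at the level of the continuous core, and there is no general procedure to ``transfer back'' large centralizer pieces of a weight to the centralizer of a \emph{state} on $P$ (cutting by a finite projection of the core changes the algebra and destroys the setup in which the hypothesis on $x_n$ and $\al_n$ is formulated). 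Producing states with large centralizers on type III factors is precisely the kind of problem connected to Haagerup's bicentralizer circle of questions, so this step cannot be waved through. A secondary issue: even granting such a $\psi$, your uniform inequality $\|\eta\|^2\leq C\sum_i\|\al_n(a_i)\cdot\eta-\eta\cdot a_i\|^2$ with a single finite set $a_1,\dots,a_k\in P^\psi$ and a constant $C$ independent of $n$ needs justification when $P^\psi$ is only assumed to have no amenable direct summand (it is not a non-amenable factor in general), and you do not provide it.

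The paper's proof shows that neither reduction is needed, which is the real content of the lemma. One keeps the modular twist: choose $\psi_1$ on $M_1$ with $\psi_1\circ\cE=\psi_1$, set $\psi=\psi_1\circ E_1$, and define $Y_n:L^2(P,\psi_1)\to K$ by $Y_n(a)=U(y_n a)$, where $U$ is the $M_1$-bimodular isometry into $K=(L^2(M_1,\vphi_1)\ot L^2(M_1,\psi_1))^{\oplus\infty}$. The varying embeddings $\al_n$ are handled by the representations $\pi_n$ of $P\otalg P\op$ on $K$, which satisfy $\|\pi_n(T)\|=\|\lambda(T)\|$ for the coarse norm uniformly in $n$; then any weak limit point $S$ of $Y_n^*Y_n$ is shown to lie in $\cZ(P)$ and to satisfy $\|S\,\eps(T)\|\leq\kappa^2\|\lambda(T)\|$ for the standard representation $\eps$ of $P\otalg P\op$ on $L^2(P,\psi_1)$. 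If $S\neq 0$, cutting by a spectral projection of $S$ gives an amenable direct summand of $P$ directly, with no trace, no centralizer, and no uniform spectral gap constant required. So your route, as written, does not close; to repair it you would either have to prove your centralizer-existence claim (hard, possibly false in general) or switch to the limit-point argument above, which works verbatim with the right action given by the standard form of $L^2(P,\psi_1)$ and only uses that the intertwining hypothesis holds strongly on the fixed vectors $b\xi_\psi$, $b\in P$.
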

\begin{proof}
For every von Neumann algebra $(Q,\om)$ with a faithful normal state, we denote by $L^2(Q,\om)$ the standard Hilbert space for $Q$ realized by completing $Q$ w.r.t.\ the scalar product $\langle x , y \rangle = \om(y^* x)$. As a $Q$-bimodule, we have $a \cdot b \cdot c = a b \sigma_{-i/2}^\om(c)$ for all $a,b,c \in M$ with $c$ being sufficiently analytic.

Fix a faithful normal conditional expectation $\cE : M_1 \to P$ and choose a faithful normal state $\psi_1$ on $M_1$ satisfying $\psi_1 \circ \cE = \psi_1$. Define the faithful normal state $\psi$ on $M$ by $\psi = \psi_1 \circ E_1$. Denote $M \ominus M_1 = \{x \in M \mid E_1(x) = 0\}$. We also write $M_i^\circ = \{a \in M_i \mid \vphi_i(a) = 0\}$. Whenever $w \in M$ is defined as an alternating product of elements in $M_1^\circ$ and $M_2^\circ$, starting and ending with a `letter' from $M_2^\circ$, we have, for all $a,b \in M_1$,
$$E_1(w^* a w b) = \vphi_1(a) \, \vphi(w^*w) \, b \quad\text{so that}\quad \langle a w b, w \rangle_\psi = \psi(w^* a w b) = \vphi_1(a) \, \psi_1(b) \, \vphi(w^* w) \; .$$
It follows that there exists an $M_1$-bimodular isometry
$$U : L^2(M \ominus M_1,\psi) \to K = (L^2(M_1,\vphi_1) \ot L^2(M_1,\psi_1))^{\oplus \infty} \; ,$$
where the $M_1$-bimodule structure on $K$ is given by the left action in the first tensor factor $L^2(M_1,\vphi_1)$ and the right action in the second tensor factor $L^2(M_1,\psi_1)$.

Write $y_n = x_n - E_1(x_n) \in M \ominus M_1$. Note that $y_n$ is still a bounded sequence in $M$ satisfying $y_n a - \al_n(a) y_n \to 0$ strongly for all $a \in P$. Define the bounded linear maps
$$Y_n : L^2(P,\psi_1) \to K : Y_n(a) = U(y_n a) \; .$$
Note that $\sup_n \|Y_n\| \leq \sup_n \|y_n\| < +\infty$. Take $\kappa > 0$ such that $\|Y_n\| \leq \kappa$ for all $n \in \N$. By construction, $Y_n(\xi \cdot a) = Y_n(\xi) \cdot a$ for all $\xi \in L^2(P,\psi_1)$ and $a \in P$. Also by construction, $\|\al_n(a) \cdot Y_n(b) - Y_n(a \cdot b)\| \to 0$ for all $a,b \in P$. Given $a \in P$, the sequences $\|\al_n(a)\|$ and $\|Y_n\|$ are bounded, so that $\|\al_n(a) \cdot Y_n(\xi) - Y_n(a \cdot \xi)\| \to 0$ for all $a \in P$ and $\xi \in L^2(P,\psi_1)$.

Define the $*$-representations $\pi_n : P \otalg P\op \to B(K)$ by $\pi_n(a \ot b\op) \xi = \al_n(a) \cdot \xi \cdot b$. Note that $\|\pi_n(T)\| = \|\lambda(T)\|$ for every $T \in P \otalg P\op$, where $\lambda$ is given by the coarse $P$-bimodule. Also define $\eps : P \otalg P\op \to B(L^2(P,\psi_1)) : \eps(a \ot b\op)\xi = a \cdot \xi \cdot b$. We have proven that
\begin{equation}\label{eq.good}
\pi_n(T) Y_n - Y_n \eps(T) \to 0 \quad\text{strongly for every $T \in P \otalg P\op$.}
\end{equation}
We claim that $Y_n \to 0$ strongly. Let $S \in B(L^2(P,\psi_1))^+$ be any weak limit point of the bounded sequence $Y_n^* Y_n$. Since
$$\langle (Y_n^* Y_n \eps(T) - \eps(T) Y_n^* Y_n) \xi,\mu \rangle = \langle (Y_n \eps(T) - \pi_n(T) Y_n)\xi,Y_n\mu \rangle + \langle Y_n \xi, (\pi_n(T^*) Y_n - Y_n \eps(T^*)) \mu \rangle \; ,$$
it follows from \eqref{eq.good} that $S$ commutes with $\eps(T)$ for all $T \in P \otalg P\op$. Thus, $S \in \cZ(P)$. For all $\xi,\eta \in L^2(P,\psi_1)$ and $T \in P \otalg P\op$, we get that
\begin{align*}
|\langle S \eps(T) \xi,\eta \rangle| &\leq \limsup_n |\langle Y_n^* Y_n \eps(T) \xi, \eta \rangle| = \limsup_n |\langle Y_n \eps(T) \xi, Y_n \eta \rangle| \\
& \leq \kappa \, \|\eta\| \, \limsup_n \|Y_n \eps(T) \xi\| = \kappa \, \|\eta\| \, \limsup_n \|\pi_n(T) Y_n \xi\| \leq \kappa^2 \, \|\xi\| \, \|\eta\| \, \|\lambda(T)\| \; .
\end{align*}
We conclude that $\|S \eps(T)\| \leq \kappa^2 \, \|\lambda(T)\|$ for all $T \in P \otalg P\op$. If $S$ is nonzero, it follows that $P$ has an amenable direct summand. So, $S = 0$ and the claim that $Y_n \to 0$ strongly is proven.

Since $y_n = Y_n(1)$, we have proven that $\|y_n\|_{2,\psi} \to 0$. Thus, $y_n \to 0$ strongly.
\end{proof}

\begin{proposition}\label{prop.aut-free-product}
For $i \in \{1,2\}$, let $(M_i,\vphi_i)$ be a von Neumann algebra with a faithful normal state and separable predual. Assume that $(M_1,\vphi_1)$ has no amenable direct summand and that $M_2 \neq \C 1$. Denote by $\Aut(M_i,\vphi_i)$ the Polish group of state preserving automorphisms.

Define $(M,\vphi) = (M_1,\vphi_1) \ast (M_2,\vphi_2)$. Then $M$ is a full factor. Denote by $\pi : \Aut M \to \Out M$ the natural quotient homomorphism. Then, the homomorphism
$$\theta : \Aut(M_1,\vphi_1) \times \Aut(M_2,\vphi_2) \to \Out M : \theta(\al,\be) = \pi(\al \ast \be)$$
is faithful, has closed range and is a homeomorphism onto this range.
\end{proposition}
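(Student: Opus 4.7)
The proof rests on the spectral gap Lemma \ref{lem.spectral-gap-free} combined with the orthogonal Fock decomposition
\[L^2(M,\vphi) = \C\xi_\vphi \oplus \bigoplus_{n \geq 1} \bigoplus_{i_1 \neq \cdots \neq i_n} L^2(M_{i_1},\vphi_{i_1})^\circ \otimes \cdots \otimes L^2(M_{i_n},\vphi_{i_n})^\circ,\]
which makes the $\vphi$-preserving conditional expectations $E_i:M\to M_i$ transparent on reduced words. For factoriality: if $z\in\cZ(M)$, Lemma \ref{lem.spectral-gap-free} applied to the constant sequence $x_n=z$ with $\al_n=\id$ forces $z=E_1(z)\in M_1$; then for any $b\in M_2\setminus\C 1$ the identity $(zb-bz)\xi_\vphi = z^\circ\xi_{\vphi_1}\otimes b^\circ\xi_{\vphi_2} - b^\circ\xi_{\vphi_2}\otimes z^\circ\xi_{\vphi_1}$ in orthogonal Fock sectors forces $z\in\C 1$. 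For fullness, given a bounded $*$-strongly asymptotically central sequence $x_n\in M$, Lemma \ref{lem.spectral-gap-free} applied to $x_n$ and to $x_n^*$ gives $x_n - E_1(x_n) \to 0$ $*$-strongly. Writing $y_n=E_1(x_n)$ and computing $E_1((by_n-y_nb)^*(by_n-y_nb))$ using $M_1$-bimodularity and the identity $E_1(b^*cb)=\vphi_2(b^*b)\vphi_1(c)1$ for $c\in M_1$, $b\in M_2^\circ$, one obtains $\phi(y_n^{\circ*}y_n^\circ)\to 0$ for every normal state $\phi\in(M_1)_*^+$, hence $y_n-\vphi_1(y_n)1\to 0$ $*$-strongly.

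For faithfulness of $\theta$: if $\al\ast\be=\Ad u$ with $u\in\cU(M)$, applying Lemma \ref{lem.spectral-gap-free} to the constant sequence $u$ with constant homomorphism $\al$ (the hypothesis $ua=\al(a)u$ is an equality) forces $u=E_1(u)\in M_1$; then $ub=\be(b)u$ for $b\in M_2^\circ$, and reading off the $(1,2)$- and $(2,1)$-Fock sectors of both sides forces $u^\circ\xi_{\vphi_1}\otimes b\xi_{\vphi_2}=0=\be(b)\xi_{\vphi_2}\otimes u^\circ\xi_{\vphi_1}$, so $u\in\C 1$ and $\al=\be=\id$. Continuity of $\theta$ is elementary: if $\al_n\to\al$ in $\Aut(M_1,\vphi_1)$ and $\be_n\to\be$ in $\Aut(M_2,\vphi_2)$, then $(\al_n\ast\be_n)(w)\xi_\vphi\to(\al\ast\be)(w)\xi_\vphi$ factor-by-factor for every reduced word $w$, and density of reduced words together with unitarity of $\al_n\ast\be_n$ on $L^2(M,\vphi)$ gives convergence in $\Aut(M,\vphi)\subset\Aut M$.

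The heart of the proof is showing that $\theta^{-1}$ is continuous on its image. Assume $\pi(\al_n\ast\be_n)\to\id$ in $\Out M$, so there exist $u_n\in\cU(M)$ with $\Ad u_n^*\circ(\al_n\ast\be_n)\to\id$ in $\Aut M$. For $a\in M_1$ this yields $u_na-\al_n(a)u_n\to 0$ $*$-strongly; Lemma \ref{lem.spectral-gap-free} applied to $u_n$ with $\al_n$, and to $u_n^*$ with $\al_n^{-1}$, gives $u_n-v_n\to 0$ $*$-strongly for $v_n=E_1(u_n)\in M_1$, and multiplying each $u_n$ by an appropriate scalar of modulus one (which does not change the inner automorphism class) allows me to assume $\vphi_1(v_n)\geq 0$. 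For $b\in M_2^\circ$, combining $\be_n(b)-v_nbv_n^*\to 0$ with the Fock computation $E_2(v_nbv_n^*)=|\vphi_1(v_n)|^2 b$ (the other three terms in the expansion of $v_nbv_n^*$ all lie in Fock sectors orthogonal to $L^2(M_2)^\circ$) yields $\be_n(b)-|\vphi_1(v_n)|^2b\to 0$ in $L^2(M_2,\vphi_2)$. Comparing $L^2$-norms forces $|\vphi_1(v_n)|^2\to 1$, hence $\be_n\to\id$ in $\Aut(M_2,\vphi_2)$. The same $|\vphi_1(v_n)|^2\to 1$, together with $\vphi_1(v_n^*v_n)\to 1$ (from $u_n-v_n\to 0$ applied to $\xi_\vphi$), yields $\|v_n-1\|_{2,\vphi_1}\to 0$. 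A modular-theoretic argument based on $a\xi_{\vphi_1}=J\sigma^{\vphi_1}_{i/2}(a)^*J\xi_{\vphi_1}$ for $\sigma$-analytic $a$ then upgrades this to $v_na\xi_{\vphi_1}\to a\xi_{\vphi_1}$ and $v_n^*a\xi_{\vphi_1}\to a\xi_{\vphi_1}$ for all $a\in M_1$, whence $v_nav_n^*\xi_{\vphi_1}\to a\xi_{\vphi_1}$; combined with $\al_n(a)\xi_{\vphi_1}-v_nav_n^*\xi_{\vphi_1}\to 0$, this gives $\al_n\to\id$ in $\Aut(M_1,\vphi_1)$.

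Closedness of the range then follows routinely: if $\theta(\al_n,\be_n)$ is Cauchy in $\Out M$, then $\theta(\al_n\al_m^{-1},\be_n\be_m^{-1})\to e$ as $n,m\to\infty$; by the homeomorphism onto the image, $(\al_n,\be_n)$ is Cauchy in the Polish group $\Aut(M_1,\vphi_1)\times\Aut(M_2,\vphi_2)$, hence converges, and its image under $\theta$ is the original limit by continuity. The main obstacle is the homeomorphism step, where one must disentangle the scalar contribution $\vphi_1(v_n)$ from the mean-zero contribution $v_n^\circ$ via the Fock-sector bookkeeping, and then promote the $L^2$-level closeness of $v_n$ to $1$ to operator-level convergence on all $L^2$-vectors using the modular theory.
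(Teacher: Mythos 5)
Your overall strategy is the same as the paper's: reduce everything to the statement that $\Ad u_n^* \circ (\al_n \ast \be_n) \to \id$ forces $\al_n \to \id$, $\be_n \to \id$ and $u_n$ asymptotically scalar, use Lemma \ref{lem.spectral-gap-free} to push $u_n$ into $M_1$, and then use the free product expectations to isolate the scalar part; your treatment of factoriality, faithfulness, continuity and the closedness reduction is fine. However, there are two genuine gaps, both at the same type III-sensitive point. First, you apply Lemma \ref{lem.spectral-gap-free} ``to $u_n^*$ with $\al_n^{-1}$'' to obtain $u_n - E_1(u_n) \to 0$ $*$-strongly. The required hypothesis is $u_n^* a - \al_n^{-1}(a) u_n^* \to 0$ strongly for every fixed $a \in M_1$, and this is not verified: it does not follow by taking adjoints of $u_n a - \al_n(a) u_n \to 0$ (adjoints do not preserve strong convergence), nor by multiplying on the right by $u_n^*$, because $\vphi$ is not a trace, so right multiplication by the varying unitaries $u_n^*$ is not $\|\cdot\|_{2,\vphi}$-bounded; and $\theta_n := \Ad u_n^* \circ (\al_n \ast \be_n) \to \id$ only gives $\theta_n(x) \to x$ for fixed $x$, not for the moving elements $\al_n^{-1}(a)$. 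Second, the convergence $\be_n(b) - v_n b v_n^* \to 0$ that you feed into the computation $E_2(v_n b v_n^*) = |\vphi_1(v_n)|^2 b$ is asserted without proof and suffers from the same defect: it amounts to $u_n(\theta_n(b) - b)u_n^* \to 0$, again a right multiplication by varying unitaries, which is not controlled by strong convergence of $\theta_n(b) - b$ alone.

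Both gaps are exactly where the paper is careful, and both are repaired by its device. Since $\Aut M$ is a topological group, $\theta_n^{-1} = \Ad v_n^* \circ (\al_n^{-1} \ast \be_n^{-1}) \to \id$ with $v_n = (\al_n \ast \be_n)^{-1}(u_n^*)$, so Lemma \ref{lem.spectral-gap-free} applies legitimately to $v_n$ with $\al_n^{-1}$; and because $\al_n \ast \be_n$ preserves $\vphi$ and commutes with $E_1$, the conclusion $\|v_n - E_1(v_n)\|_{2,\vphi} \to 0$ transports to $\|u_n^* - E_1(u_n^*)\|_{2,\vphi} \to 0$, which yields the $*$-strong statement you want. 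For the second point, the paper conjugates in the harmless direction: it shows $x_n^* \be_n(b) x_n \to b$ weakly with $x_n = E_1(u_n)$, using only left multiplications and the strong convergence of $u_n - x_n$, then applies $E_2$ to get $|\vphi_1(x_n)|^2 \langle \be_n(b), b\rangle \to 1$ and hence $|\vphi_1(x_n)| \to 1$. Finally, once $u_n - \vphi(u_n)1 \to 0$ $*$-strongly, you have $\Ad u_n \to \id$, hence $\al_n \ast \be_n \to \id$, and restricting to $M_1$ and $M_2$ gives $\al_n \to \id$ and $\be_n \to \id$ simultaneously; your separate $L^2$-norm argument for $\be_n$ and the modular-theoretic upgrade for $\al_n$ are then unnecessary detours.
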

\begin{proof}
It suffices to prove the following statement: if $\al_n \in \Aut(M_1,\vphi_1)$ and $\be_n \in \Aut(M_2,\vphi_2)$ are sequences of state preserving automorphisms such that $\Ad u_n^* \circ (\al_n \ast \be_n) \to \id$ for some sequence of unitaries $u_n \in M$, then $\al_n \to \id$, $\be_n \to \id$ and $u_n - \vphi(u_n) 1 \to 0$ $*$-strongly. By taking $\al_n = \be_n = \id$, this then says in particular that $M$ is full.

Denote by $E_i : M \to M_i$ the canonical $\vphi$-preserving conditional expectation. For every $a \in M_1$, we have that $u_n a - \al_n(a) u_n \to 0$ strongly. It follows from Lemma \ref{lem.spectral-gap-free} that $u_n - E_1(u_n) \to 0$ strongly.

Write $v_n = (\al_n \ast \be_n)^{-1}(u_n^*)$. Then also $\Ad v_n^* \circ (\al_n^{-1} \ast \be_n^{-1}) \to \id$. Applying the previous paragraph, it follows that $\|v_n - E_1(v_n)\|_{2,\vphi} \to 0$. Since $\al_n \ast \be_n$ is state preserving and commutes with $E_1$, we conclude that $\|u_n^* - E_1(u_n^*)\|_{2,\vphi} \to 0$, so that $u_n^* - E_1(u_n^*) \to 0$ strongly. In combination with the previous paragraph, we have proven that $u_n - E_1(u_n) \to 0$ $*$-strongly.

Write $x_n = E_1(u_n)$. Note that $\|x_n\| \leq 1$ for all $n$. Since $M_2 \neq \C 1$, we can fix $b \in M_2$ with $\vphi_2(b) = 0$ and $\vphi_2(b^* b) = 1$. Since $u_n^* \be_n(b) u_n \to b$ weakly and $u_n - x_n \to 0$ strongly, we get that $x_n^* \be_n(b) x_n \to b$ weakly and thus $E_2(x_n^* \be_n(b) x_n) \to b$ weakly. The left hand side equals $|\vphi_1(x_n)|^2 \, \be_n(b)$. Therefore,
$$\langle |\vphi_1(x_n)|^2 \, \be_n(b) , b \rangle = \langle E_2(x_n^* \be_n(b) x_n), b \rangle \to \langle b, b \rangle = 1 \; .$$
Since $|\vphi_1(x_n)| \leq 1$ and $|\langle \be_n(b), b \rangle| \leq 1$ because $\be_n$ is state preserving, we conclude that $|\vphi_1(x_n)| \to 1$. Since $\|x_n\| \leq 1$ for all $n$, this implies that $x_n - \vphi_1(x_n) 1 \to 0$ $*$-strongly. It thus follows that $u_n - \vphi(u_n) 1 \to 0$ $*$-strongly.

Then also $\Ad u_n \to \id$, so that $\al_n \ast \be_n \to \id$. By restricting to $M_i$, it follows that $\al_n \to \id$ and $\be_n \to \id$.
\end{proof}

\begin{lemma}\label{lem.playing-with-orth}
Let $H_\R$ be a separable infinite dimensional real Hilbert space. Let $V_n \in \cO(H_\R)$ such that $(V_n)_{n \in \N}$ is weakly convergent. Let $N \in \N$.

There exist orthonormal vectors $\{e_1,\ldots,e_N\}$, orthonormal vectors $\{f_1,\ldots,f_N\}$ and $W_n \in \cO(H_\R)$ such that
\begin{equation}\label{eq.good-limits}
\begin{split}
& W_n e_i = e_i \;\;\text{and}\;\; W_n V_n e_i \to f_i \;\;\text{in norm for every $i \in \{1,\ldots,N\}$, and}\\
& (\R e_i + \R f_i) \perp (\R e_j + \R f_j) \;\;\text{for all $i,j \in \{1,\ldots,N\}$ with $i \neq j$.}
\end{split}
\end{equation}
An analogous result holds for a weakly convergent sequence of unitaries on a separable infinite dimensional Hilbert space.
\end{lemma}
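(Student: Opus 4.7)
The plan is threefold: first choose the $e_i$'s carefully, exploiting the infinite-dimensionality of $H_\R$ to kill certain inner products involving the weak limit of the $V_n$; next read off the $f_i$'s from the asymptotic decomposition of $V_n e_i$ relative to $E := \lspan\{e_1,\ldots,e_N\}$; and finally construct $W_n$ by rotating an asymptotically orthonormal system inside $E^\perp$ onto an exactly orthonormal one.

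Let $V$ be the weak limit of the $V_n$, which is a contraction on $H_\R$. Since the $f_j$'s will turn out to lie in $\R e_j + E^\perp$, the plane orthogonality $(\R e_i + \R f_i)\perp(\R e_j + \R f_j)$ for $i\neq j$ reduces to the requirement $\langle e_i, Ve_j\rangle = 0$ whenever $i\neq j$. Such an orthonormal family $e_1,\ldots,e_N$ can be produced by induction: having chosen $e_1,\ldots,e_{k-1}$, pick $e_k$ to be any unit vector orthogonal to the finite-dimensional subspace $\lspan\{e_i, Ve_i, V^*e_i : i<k\}$, which is possible since $H_\R$ is infinite-dimensional. Writing $\alpha_i := \langle e_i, Ve_i\rangle$ and decomposing $V_n e_i = a_i^{(n)} + b_i^{(n)}$ with $a_i^{(n)}\in E$, $b_i^{(n)}\in E^\perp$, weak convergence $V_n \to V$ together with $\dim E < \infty$ yields $a_i^{(n)} \to \alpha_i e_i$ in norm; combined with $\|V_ne_i\| = 1$ and $\langle V_ne_i, V_ne_j\rangle = \delta_{ij}$, this forces $\|b_i^{(n)}\|^2 \to r_i^2 := 1-\alpha_i^2$ and $\langle b_i^{(n)}, b_j^{(n)}\rangle \to 0$ for $i\neq j$.

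Now fix orthonormal vectors $\tilde c_1,\ldots,\tilde c_N$ in the infinite-dimensional space $E^\perp$ and set $c_i := r_i \tilde c_i$, $f_i := \alpha_i e_i + c_i$. Then $\{f_i\}$ is orthonormal and the required plane orthogonality holds by construction. It remains to build $W_n \in \cO(H_\R)$ fixing $E$ pointwise and satisfying $W_n b_i^{(n)} \to c_i$ in $E^\perp$. Indices with $r_i = 0$ cause no trouble, as then $b_i^{(n)} \to 0$ in norm. For the remaining indices, the Gram matrix of the $b_i^{(n)}$ converges to the invertible diagonal matrix with entries $r_i^2$, so for $n$ large, Gram--Schmidt inside $E^\perp$ produces an orthonormal family $\tilde b_i^{(n)}$ with $\tilde b_i^{(n)} - b_i^{(n)}/r_i \to 0$. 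Define $W_n$ on $E^\perp$ by $\tilde b_i^{(n)} \mapsto \tilde c_i$, extend to an orthogonal transformation of $E^\perp$ using that both orthogonal complements are infinite-dimensional of equal (infinite) dimension, and extend by the identity on $E$; then $W_n b_i^{(n)} \to c_i$ by direct computation. No step is genuinely difficult; the one point requiring care is the inductive choice in step~1, where both $V$ and $V^*$ must be dealt with simultaneously. The unitary analogue on a complex Hilbert space goes through verbatim with $\R$ replaced by $\C$.
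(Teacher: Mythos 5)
Your proof is correct and follows essentially the same route as the paper: pass to the weak limit $V$, choose the $e_i$ inductively orthogonal to a finite set of vectors built from $V$ and $V^*$ applied to the previous ones, show that the residual parts of $V_n e_i$ are asymptotically orthogonal with convergent norms, and map them onto fixed orthonormal targets by an approximate Gram--Schmidt rotation $W_n$ that fixes the $e_i$. The only (harmless) difference is that you decompose $V_n e_i$ relative to $E=\lspan\{e_1,\dots,e_N\}$ and push everything else into $E^\perp$, whereas the paper keeps the norm-convergent component $Ve_i$ intact (and therefore also requires $e_j \perp V^*Ve_i$ in the inductive choice), so your $f_i=\alpha_i e_i + r_i\tilde c_i$ differs slightly in form from the paper's $f_i = Ve_i + \alpha_i g_i$.
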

\begin{proof}
Let $V_n \to T$ weakly. Note that $\|T\| \leq 1$. Inductively choose unit vectors $\{e_1,\ldots,e_N\}$ such that whenever $i < j$, we have that $e_j \perp \{e_i, Te_i, T^* e_i, T^* T e_i\}$. Define $\mu_{k,n} = V_n e_k - T e_k$ and define $\al_k \geq 0$ such that $\al_k^2 = 1 - \|T e_k\|^2$. By construction,
$$V_n e_k = T e_k + \mu_{k,n} \;\; , \quad \mu_{k,n} \to 0 \;\;\text{weakly, and}\quad \|\mu_{k,n}\| \to \al_k \; .$$
When $i \neq j$, we have that $V_n e_i \perp V_n e_j$. Also, $T e_i \perp T e_j$ by construction. Therefore, for $i \neq j$, we have
$$0 = \langle V_n e_i, V_n e_j \rangle = \langle T e_i , \mu_{j,n} \rangle + \langle \mu_{i,n}, T e_j \rangle + \langle \mu_{i,n}, \mu_{j,n} \rangle \; .$$
Since $\mu_{k,n} \to 0$ weakly, it follows that $\langle \mu_{i,n}, \mu_{j,n}\rangle \to 0$.

Define $K$ as the linear span of all the vectors $e_i$ and $T e_i$ with $i \in \{1,\ldots,N\}$. Let $L = K^\perp$. Since $\mu_{k,n} \to 0$ weakly, $\|P_K \mu_{k,n}\| \to 0$. Put $\eta_{k,n} = P_L \mu_{k,n}$, so that $\|\mu_{k,n} - \eta_{k,n}\| \to 0$. It follows in particular that $\|\eta_{k,n}\| \to \al_k$ and that $\langle \eta_{i,n},\eta_{j,n}\rangle \to 0$ when $i \neq j$.

Choose orthonormal vectors $\{g_1,\ldots,g_N\}$ in $L$. By the Gram-Schmidt procedure, we can choose $W_n \in \cO(H_\R)$ such that $W_n \xi = \xi$ for all $\xi \in K$ and $W_n \eta_{k,n} \to \al_k g_k$ in norm. By construction,
$$W_n V_n e_k \to T e_k + \al_k g_k \;\;\text{in norm.}$$
Define $f_k = T e_k + \al_k g_k$. By construction, the conclusion of the lemma holds.
\end{proof}

\begin{proof}[{Proof of Proposition \ref{prop.free-Bog-Araki-Woods}}]
When $H_\R$ is finite dimensional, the group $\cG$ is compact. By \cite[Lemma 3.2]{HT18}, every $\al_v$ with $v \in \cO(H_\R) \setminus \{1\}$ is an outer automorphism of $M$ and the proposition follows. We thus assume that $H_\R$ is infinite dimensional.

The spectral measure class of the orthogonal representation $(U_t)_{t \in \R}$ is a symmetric measure class on $\R$ that we denote by $\mu$.

{\bf Case 1~:} there is no $a \geq 0$ such that the measure class $\mu$ is concentrated on $\{-a,a\}$. We can then choose a Borel set $\cU_1 \subset \R$ such that $\cU_1 = - \cU_1$ and such that both $\cU_1$ and its complement $\cU_2 = \R \setminus \cU_1$ have positive measure. Make this choice such that $0 \in \cU_2$. Denote by $H^i_\R \subset H_\R$ the spectral subspace corresponding to $\cU_i$. We have $H_\R = H^1_\R \oplus H^2_\R$ and $\cG = \cG_1 \times \cG_2$. Then the free Araki-Woods factor $(M,\vphi) = \Gamma(H_\R,U)\dpr$ is the free product of the free Araki-Woods factors $(M_i,\vphi_i) = \Gamma(H^i_\R,U)\dpr$. Since $0 \not\in \cU_1$, we have that $\dim_\R H^1_\R \geq 2$, so that $M_1$ has no amenable direct summand. Since $\cG = \cG_1 \times \cG_2$, we identify $\cG$ with a closed subgroup of $\Aut(M_1,\vphi_1) \times \Aut(M_2,\vphi_2)$. Proposition \ref{prop.free-Bog-Araki-Woods} then follows from Proposition \ref{prop.aut-free-product}.

{\bf Case 2~:} the measure class $\mu$ is concentrated on $\{-a,a\}$ for some $a \geq 0$. Let $V_n \in \cG$ be a sequence such that $\pi(\al_{V_n}) \to \id$. We have to prove that $V_n \to 1$ weakly. Passing to a subsequence, we may assume that $V_n \to T$ weakly and we have to prove that $T=1$. Take unitaries $u_n \in \cU(M)$ such that $\Ad u_n^* \circ \al_{V_n} \to \id$.

We claim that there exist $2$-dimensional real subspaces $K^1_\R, K^2_\R \subset H_\R$ and a sequence $W_n \in \cG$ with the following properties: the subspaces $K^j_\R$ are globally invariant under $(U_t)_{t \in \R}$, we have $K^1_\R \perp K^2_\R$, writing $K_\R = K^1_\R + K^2_\R$, we have that $W_n \xi = \xi$ for all $\xi \in K_\R$ and $n \in \N$, and
$$W_n V_n \xi \to F \xi \quad\text{in norm, for all $\xi \in K_\R$,}$$
where $F : K_\R \to H_\R$ is an isometry that commutes with $(U_t)_{t \in \R}$ and satisfies $(K^1_\R + F(K^1_\R)) \perp (K^2_\R + F(K^2_\R))$.

When $a = 0$, we have that $U_t = 1$ for all $t \in \R$. In that case, the claim follows from Lemma \ref{lem.playing-with-orth}, by taking $K^1_\R = \R e_1 + \R e_2$ and $K^2_\R = \R e_3 + \R e_4$.

When $a > 0$, write $\lambda = \exp(a)$. We can then identify the complexification $H_\R + i H_\R$ with a Hilbert space of the form $\cH \oplus \overline{\cH}$, where $\cH$ is a (complex) Hilbert space, $U_t(\xi,\overline{\eta}) = (\lambda^{it} \xi, \lambda^{-it} \overline{\eta})$ and $H_\R = \{(\xi,\overline{\xi}) \mid \xi \in \cH\}$. Then, $\cG$ is identified with $\cU(\cH)$, with every unitary $V \in \cU(\cH)$ giving rise to the orthogonal transformation of $H_\R$ defined by restricting $V \oplus \overline{V}$ to $H_\R$. So, we view $V_n \in \cG$ as the sequence $V_n \oplus \overline{V_n}$.

By the complex version of Lemma \ref{lem.playing-with-orth}, we can choose orthonormal vectors $\{e_1,e_2\}$ and $\{f_1,f_2\}$ in $\cH$, and unitaries $W_n \in \cU(\cH)$ such that $W_n e_i = e_i$ for all $n \in \N$ and $i \in \{1,2\}$, while $W_n V_n e_i \to f_i$ in norm. Also, $(\C e_1 + \C f_1) \perp (\C e_2 + \C f_2)$. We can now define $K^i_\R \subset H_\R$ by
$$K^i_\R = \{(z e_i , \overline{z e_i}) \mid z \in \C\}$$
and use the orthogonal transformations $W_n \oplus \overline{W_n}$. Again, the claim is proven.

Write $L^i_\R = K^i_\R + F(K^i_\R)$. Define $L^0_\R = (L^1_\R + L^2_\R)^\perp$. We find the free product decomposition $(M,\vphi) = (M_0,\vphi_0) \ast (M_1,\vphi_1) \ast (M_2,\vphi_2)$, where $M_i = \Gamma(L^i_\R,U)\dpr$. Denote by $E_i : M \to M_i$ the canonical $\vphi$-preserving conditional expectation. For $i \in \{1,2\}$, we also have the subalgebras $P_i \subset M_i$ given by $P_i = \Gamma(K^i_\R,U)\dpr$. Note that there exists a faithful normal conditional expectation of $M_i$ onto $P_i$. Since $K^i_\R$ is $2$-dimensional, the von Neumann algebra $P_i$ is a nonamenable factor.

The restriction of $F$ to $K^i_\R$ gives rise to the state preserving embedding $\be_i : P_i \to M_i$. Write $v_n = \al_{W_n}(u_n)$. Since $\Ad u_n^* \circ \al_{V_n} \to \id$, we get that
$$\|\al_{V_n}(a) u_n - u_n a\|_{2,\vphi} \to 0 \quad\text{for all $a \in M$.}$$
Since $\al_{W_n}$ is state preserving and $W_n \xi = \xi$ for all $\xi \in K^i_\R$, it follows that
$$\|\al_{W_n V_n}(a) v_n - v_n a \|_{2,\vphi} \to 0 \quad\text{for all $a \in P_i$.}$$
Since $W_n V_n \xi \to F \xi$ for all $\xi \in K^i_\R$, we get that
\begin{equation}\label{eq.without}
\|\al_{W_n V_n}(a) - \be_i(a)\|_{2,\vphi} \to 0 \quad\text{for all $a \in P_i$.}
\end{equation}
Since $\|\vphi \circ \Ad u_n^* \circ \al_{V_n} - \vphi\| \to 0$ and since $\vphi$ is invariant under $\al_{W_n}$ and $\al_{W_n V_n}$, we get that $\|\vphi \circ \Ad v_n^* - \vphi \|_{2,\vphi} \to 0$. It then follows from \eqref{eq.without} that $\|\al_{W_n V_n}(a)v_n - \be_i(a)v_n\|_{2,\vphi} \to 0$ for all $a \in P_i$. We thus conclude that
$$\|\be_i(a) v_n - v_n a\|_{2,\vphi} \to 0 \quad\text{for all $a \in P_i$.}$$
By Lemma \ref{lem.spectral-gap-free}, we get that $\|v_n - E_i(v_n)\|_{2,\vphi} \to 0$. Since $E_1(E_2(x)) = \vphi(x)1$ for all $x \in M$, we conclude that $\|v_n - \vphi(v_n)1\|_{2,\vphi} \to 0$. Since $\al_{W_n}$ is state preserving, it follows that $\|u_n - \vphi(u_n)1\|_{2,\vphi} \to 0$. In particular, $|\vphi(u_n)| \to 1$. This means that $u_n - \vphi(u_n)1 \to 0$ $*$-strongly. Thus, $\Ad u_n \to \id$. Then also $\al_{V_n} \to \id$, from which it follows that $V_n \to 1$ strongly, so that $T = 1$.
\end{proof}

\section{Questions, comments and counterexamples}\label{final.sec}

We start by providing the following easy examples of outer actions that are not strictly outer.

\begin{example}\label{exam.counterexample}
First, if $M$ is a factor of type III$_0$ whose $T$-invariant is trivial and if $\vphi$ is a faithful normal state on $M$, the action $\R \actson^{\sigma^\vphi} M$ by modular automorphisms is outer, but the crossed product $M \rtimes_{\sigma^\vphi} \R$ is not even a factor. In particular, $\si^\vphi$ is not strictly outer.

Second, if $\Gamma$ is a countable abelian group and $\Gamma \actson^\be (X,\mu)$ is an essentially free, probability measure preserving, weakly mixing action, then the dual action $\al$ of $K = \widehat{\Gamma}$ on the crossed product II$_1$ factor $M = L^\infty(X) \rtimes_\be \Gamma$ is outer, but again, the crossed product $M \rtimes_\al K \cong L^\infty(X) \ovt B(\ell^2(\Gamma))$ is not even a factor.
\end{example}

Note that in Example \ref{exam.counterexample}, the actions on $M$ are by approximately inner automorphisms: in \cite[Proposition 3.9]{Con74} (see also \cite[Corollary 6.24]{MT12}), it was proven that every modular automorphism of a type III$_0$ factor is approximately inner, while in the second example, it follows from the Rohlin lemma (see \cite{OW79}) that for every $\om \in K$, there exists a sequence of unitaries $u_n \in \cU(L^\infty(X,\mu))$ such that $\be_g(u_n) - \om(g) u_n \to 0$ $*$-strongly for every $g \in \Gamma$. Then, $\Ad u_n^* \to \al_\om$ in $\Aut M$.

In combination with Theorem \ref{thm.main}, this thus leads naturally to the following question, asking whether actions by not approximately inner automorphisms are automatically strictly outer.

\begin{question}\label{question.strict-outer}
Let $G \actson^\al M$ be a strongly continuous action of a lcsc group $G$ on a factor $M$ with separable predual. Denote by $\pi : \Aut M \to \Aut M / \overline{\Inn M}$ the natural quotient homomorphism.

Does the faithfulness of $\pi \circ \al$ imply that $\al$ is strictly outer?
\end{question}

One should not expect to give an easy proof for a positive answer to question \ref{question.strict-outer}. Indeed, when $M$ is a II$_\infty$ factor, $G = \R$ and $\R \actson^\al M$ is a trace scaling action, the relative commutant theorem \cite[Theorem 5.1]{CT76} says that the action $\al$ is strictly outer. Since $\al$ is trace scaling, it is trivial that the homomorphism from $\R$ to $\Aut M / \overline{\Inn M}$ is faithful. Therefore, a positive answer to question \ref{question.strict-outer} also has to cover the notoriously difficult relative commutant theorem in modular theory.

In the very specific case of a tensor product action with one of the tensor factors being full, the following result provides a positive answer to question \ref{question.strict-outer}.


\begin{proposition}\label{prop.product-strictly-outer}
Let $G \actson^\al N$ and $G \actson^\be M$ be strongly continuous actions of a locally compact group $G$ on factors $N$ and $M$. Consider the diagonal action $\gamma_g = \al_g \ot \be_g$ of $G$ on $N \ovt M$.

If $\be$ is strictly outer, then $\gamma$ is strictly outer.
\end{proposition}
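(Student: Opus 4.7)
The plan is to ``untwist'' the action on $N$ via its canonical implementation and place the crossed product inside $B(L^2(N))\ovt(M\rtimes_\be G)$, where the strict outerness of $\be$ can be exploited directly. Following the setup of the proof of Theorem~\ref{thm.main}, I realize $(N\ovt M)\rtimes_\gamma G$ as the subalgebra of $N\ovt M\ovt B(L^2(G))$ generated by $\gamma(N\ovt M)$ and $1\ot 1\ot\lambda(G)$, with $\gamma(x)(g)=\gamma_{g^{-1}}(x)$. Let $V_g^\al\in\cU(L^2(N))$ denote the canonical implementation of $\al_g$, let $U_\al\in B(L^2(N))\ovt L^\infty(G)$ be the decomposable unitary given by $(U_\al\xi)(g)=V_g^\al\xi(g)$, and set $W=(U_\al)_{13}$, acting trivially on leg~$2$.

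A direct computation, entirely parallel to those in Section~2, gives $W\gamma(a\ot b)W^*=a\ot\be(b)$ and $W(1\ot 1\ot\lambda_g)W^*=V_g^\al\ot 1\ot\lambda_g$. Both right-hand sides lie in $B(L^2(N))\ovt(M\rtimes_\be G)$: the first because $\be(b)\in M\rtimes_\be G$, and the second because $V_g^\al\in B(L^2(N))$ and $1\ot\lambda_g\in M\rtimes_\be G$. Hence $\tilde\cA:=W\,[(N\ovt M)\rtimes_\gamma G]\,W^*\subset B(L^2(N))\ovt(M\rtimes_\be G)$, and this containment is the main technical content of the argument.

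Now let $T\in(N\ovt M)'\cap(N\ovt M)\rtimes_\gamma G$ and set $S:=WTW^*\in\tilde\cA$. Then $S$ commutes with $W\gamma(N\ovt M)W^*=N\ovt\be(M)$. The strict outerness hypothesis $\be(M)'\cap(M\rtimes_\be G)=\C$ implies that commutation of $S$ with $1\ot\be(M)$ inside $B(L^2(N))\ovt(M\rtimes_\be G)$ forces $S\in B(L^2(N))\ot 1\ot 1$. Commutation with $N\ot 1\ot 1$ together with factoriality of $N$ (standardly represented on $L^2(N)$) then gives $S=S_0\ot 1\ot 1$ with $S_0\in N'\cap B(L^2(N))=JNJ$.

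To finish, observe that $T\in N\ovt M\ovt B(L^2(G))$ automatically commutes with $J_N n J_N\ot 1\ot 1$ for every $n\in N$. Using $V_g^\al J_N=J_N V_g^\al$, one computes $W(J_N n J_N\ot 1\ot 1)W^*$ to be the decomposable operator in legs~$(1,3)$ given by $g\mapsto J_N\al_g(n)J_N$. The resulting commutation of $S_0\ot 1\ot 1$ with this operator forces $S_0$ to commute with $J_N\al_g(n)J_N$ for almost every $g\in G$ and every $n\in N$. Since $\al_g$ is an automorphism of $N$, as $n$ varies at fixed $g$ these elements sweep out all of $JNJ$; so $S_0$ commutes with $JNJ$, which yields $S_0\in\cZ(JNJ)=\C$. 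Therefore $T=W^*SW\in\C\cdot 1$, and $\gamma$ is strictly outer.
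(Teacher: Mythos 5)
Your proof is correct and is essentially the paper's own argument: you conjugate by the decomposable unitary built from the canonical implementation of $\al$ (the paper's $\theta(T)=UTU^*$ is your $S=WTW^*$), land in $B(L^2(N))\ovt(M\rtimes_\be G)$, use strict outerness of $\be$ to reduce to the first leg, and then use commutation with $N\ot1\ot1$ and with the conjugated copy of $J_N N J_N$ to conclude $S_0\in JNJ\cap(JNJ)'=\C$. The only cosmetic difference is notational (legs and the explicit use of $V_g^\al J_N=J_N V_g^\al$), so there is nothing to change.
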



\begin{proof}
Define $\gamma : N \ovt M \to N \ovt M \ovt L^\infty(G) : \gamma(d)(g) = \gamma_{g^{-1}}(d)$, so that $(N \ovt M) \rtimes_\gamma G$ is the von Neumann subalgebra of $N \ovt M \ovt B(L^2(G))$ generated by $\gamma(N \ovt M)$ and $1 \ot 1 \ot L(G)$. We similarly define $\be : M \to M \ovt L^\infty(G)$ so that $M \rtimes_\be G$ is the von Neumann subalgebra of $M \ovt B(L^2(G))$ generated by $\be(M)$ and $1 \ot L(G)$.

Denote by $N \subset B(H)$ the standard representation of $N$ and let $(U_g)_{g \in G}$ be the canonical implementation of the action $\al$. Define the unitary $U \in B(H) \ovt 1 \ovt L^\infty(G)$ by $U(g) = U_g \ot 1$. Then,
\begin{equation}\label{eq.theta}
\theta : (N \ovt M)\rtimes_\gamma G \to B(H) \ovt (M \rtimes_\be G) : \theta(T) = U T U^*
\end{equation}
is a well defined $*$-homomorphism satisfying
$$\theta(\gamma(a \ot b)) = a \ot \be(b) \quad\text{and}\quad \theta(1 \ot 1 \ot \lambda_g) = U_g \ot 1 \ot \lambda_g \quad\text{for all $a \in N$, $b \in M$, $g \in G$.}$$
Let $a \in \gamma(N \ovt M)' \cap (N \ovt M) \rtimes_\gamma G$. Then, $\theta(a)$ is an element of $B(H) \ovt (M \rtimes_\be G)$ that commutes with $1 \ot \be(M)$. Since $\be$ is strictly outer, we conclude that $\theta(a) = b \ot 1 \ot 1$ for some $b \in B(H)$.

Since $\theta(a)$ commutes with $N \ot 1 \ot 1$, we get that $b = J c J$ with $c \in N$. Since $a$ belongs to $N \ovt M \ovt B(L^2(G))$, we get that $a$ commutes with $J d J \ot 1 \ot 1$ for every $d \in N$. Note that $U(JdJ \ot 1 \ot 1)U^*$ belongs to $B(H) \ovt 1 \ovt L^\infty(G)$ and is given by the function $g \mapsto J \al_g(d) J \ot 1$. Since $\theta(a) = J c J \ot 1 \ot 1$ commutes with all these operators, we conclude that $c \in \cZ(N) = \C 1$. Thus, $a \in \C 1$.
\end{proof}

In Theorem \ref{thm.main}, we have proven that crossed products $M \rtimes_\al G$ are full whenever $M$ is full and the homomorphism $G \to \Out M$ is faithful and has closed range. In light of question \ref{question.strict-outer}, it is natural to try to prove a similar result without assuming that $M$ is full, under the hypothesis that the homomorphism $G \to \Aut M / \overline{\Inn M}$ is faithful and has closed range. One may expect that this condition should be sufficient to prove that centralizing sequences of $M \rtimes_\al G$ (see below for terminology) are equivalent with asymptotically $G$-invariant centralizing sequences in $M$. This leads to the following question, for which we again provide positive evidence in the case of a product action (see Proposition \ref{prop.product-fullness}).

Recall that a \emph{centralizing} sequence in a von Neumann algebra $N$ with separable predual is a sequence $a_n \in N$ satisfying $\sup_n \|a_n\| < +\infty$ and $\| \om \cdot a_n - a_n \cdot \om\| \to 0$ for every $\om \in N_*$. A centralizing sequence $(a_n)_{n \in \N}$ is said to be \emph{trivial} if there exists a bounded sequence $t_n \in \C$ such that $a_n - t_n 1 \to 0$ $*$-strongly. Note that a factor $N$ with separable predual is full if and only if all centralizing sequences in $N$ are trivial.

\begin{question}\label{question.fullness}
Let $G \actson^\al M$ be a strongly continuous action of a lcsc group $G$ on a factor $M$ with separable predual. Denote by $\pi : \Aut M \to \Aut M / \overline{\Inn M}$ the natural quotient homomorphism. Make the following assumptions.
\begin{itemlist}
\item The homomorphism $\pi \circ \al$ is faithful and has closed image.
\item Every centralizing sequence $a_n \in M$ satisfying $\al_g(a_n) - a_n \to 0$ $*$-strongly for every $g \in G$, is trivial.
\end{itemlist}
Does it follow that $M \rtimes G$ is a full factor?
\end{question}

Question \ref{question.fullness} is much more challenging than question \ref{question.strict-outer}. While question \ref{question.strict-outer} is trivial for discrete groups $G$ (because then every outer action is strictly outer), question \ref{question.fullness} is even open when $G$ is a discrete group and $M$ is a II$_1$ factor. The problem is that we have no analogue of Lemma \ref{lem.amine} when $M$ is no longer full. The only positive result in this direction is provided by \cite[Theorem 3.1]{Con75} saying that for a II$_1$ factor $M$ and a single automorphism $\al \in \Aut M$, we have that the trivial bimodule $L^2(M)$ is weakly contained in $\cH(\al)$ if and only if $\al$ is approximately inner. Even for a countable family of automorphisms, this is not clear: assume that $\al_n \in \Aut M$, $n \in \N$, is a sequence of automorphisms of a II$_1$ factor $M$. Assume that the identity $\id$ does not belong to the closure of $\{\pi(\al_n) \mid n \in \N\}$, where $\pi : \Aut M \to \Aut M / \overline{\Inn M}$ is the quotient homomorphism. Can we conclude that the trivial bimodule $L^2(M)$ is not weakly contained in $\bigoplus_{n \in \N} \cH(\al_n)$~? When $M$ is of type II$_\infty$ or type III, the situation is even more delicate, since then Lemma \ref{lem.amine} is no longer true. For instance, when $\al$ is a trace scaling automorphism of the hyperfinite II$_\infty$ factor $M$, the bimodules $L^2(M)$ and $\cH(\al)$ are weakly equivalent, but $\al$ is not approximately inner.

As in Proposition \ref{prop.product-strictly-outer}, we give a positive answer to question \ref{question.fullness} in the case of product actions.

\begin{proposition}\label{prop.product-fullness}
Let $G \actson^\al N$ and $G \actson^\be M$ be strongly continuous actions of a lcsc group $G$ on factors $N$ and $M$ with separable predual. Denote by $\gamma_g = \al_g \ot \be_g$ the diagonal action of $G$ on $N \ovt M$. Consider the following two statements.
\begin{enumlist}
\item $(N \ovt M) \rtimes_\gamma G$ is a full factor.
\item Every centralizing sequence $a_n \in N$ satisfying $\al_g(a_n) - a_n \to 0$ $*$-strongly for every $g \in G$, is trivial.
\end{enumlist}
Then, $1 \Rightarrow 2$. If $\be$ is strictly outer and that $M \subset M \rtimes_\be G$ has stable $w$-spectral gap, then also $2 \Rightarrow 1$.
\end{proposition}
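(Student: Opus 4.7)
For the direction $1 \Rightarrow 2$, the plan is as follows. Given a centralizing sequence $a_n \in N$ with $\al_g(a_n) - a_n \to 0$ $*$-strongly for every $g \in G$, I set $b_n = \gamma(a_n \ot 1) \in (N \ovt M) \rtimes_\gamma G$. The key subclaim is that $b_n$ is a centralizing sequence in the crossed product. Since $a_n$ is centralizing in $N$, the tensor $a_n \ot 1$ is centralizing in $N \ovt M$, which handles the predual-norm condition against functionals supported on $\gamma(N \ovt M)$. The $G$-invariance hypothesis, combined with boundedness and a dominated-convergence argument in the spirit of step~C of the proof of Theorem~\ref{thm.main}, shows that $b_n$ also asymptotically commutes with the canonical group unitaries $u_g$ in the predual norm, yielding the full centralizing condition. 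Fullness of $(N \ovt M) \rtimes_\gamma G$ then gives $b_n - t_n 1 \to 0$ $*$-strongly for some bounded scalar sequence $t_n \in \C$, and since $\gamma$ is an isometric $*$-embedding, $a_n - t_n 1 \to 0$ $*$-strongly.

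For the direction $2 \Rightarrow 1$, let $d_n \in (N \ovt M) \rtimes_\gamma G$ be a centralizing sequence. The plan is to peel off the $M \rtimes_\be G$ component of $d_n$ using the stable $w$-spectral gap, and then to extract a centralizing $G$-invariant sequence in $N$ to which hypothesis~2 applies. Via the $*$-homomorphism $\theta$ from the proof of Proposition~\ref{prop.product-strictly-outer}, the centralizing of $d_n$ gives $[\theta(d_n), 1 \ot \be(b)] \to 0$ $*$-strongly for every $b \in M$. Strict outerness of $\be$ yields $(1 \ot M)' \cap (B(H) \ovt (M \rtimes_\be G)) = B(H) \ot 1$, and the stable $w$-spectral gap produces a bounded $y_n \in B(H)$ with $\theta(d_n) - y_n \ot 1 \to 0$ $*$-strongly. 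Conjugating back by $U$ gives $d_n - e_n \to 0$ $*$-strongly, where $e_n \in B(H) \ovt 1 \ovt L^\infty(G)$ is the decomposable operator $g \mapsto U_g^* y_n U_g$.

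Exploiting $d_n - e_n \to 0$ together with the asymptotic commutation of $d_n$ with $\gamma(a \ot 1)$ for $a \in N$, with the group unitaries $u_g$, and with the commutant elements $JaJ \ot 1 \ot 1$ (which commute identically with $d_n \in N \ovt M \ovt B(L^2(G))$), and integrating the resulting decomposable-operator identities against test vectors of the form $\xi(g) = 1_K(g) \eta$ over compact neighborhoods $K$ of $e \in G$ while using $U_g^* a U_g = \al_{g^{-1}}(a)$ and $U_g J = J U_g$, I obtain three $*$-strong convergences for $y_n$: $[y_n, a] \to 0$ and $[y_n, JaJ] \to 0$ for every $a \in N$, and $y_n - U_g^* y_n U_g \to 0$ for every $g \in G$. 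Morally these say that $y_n$ lies asymptotically in $(N \cup JNJ)' \cap B(H) = \cZ(N) = \C 1$ and is $G$-invariant, so hypothesis~2 should force $y_n$ to be asymptotically scalar, which would give $d_n - t_n 1 \to 0$ $*$-strongly.

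The main obstacle is precisely the rigorization of this last step: producing from $y_n \in B(H)$ a bounded sequence $\tilde y_n \in N$ that is centralizing in $N$, asymptotically $\al$-invariant, and satisfies $y_n - \tilde y_n \to 0$ $*$-strongly, so that hypothesis~2 applies. This is a Popa-style spectral-gap statement for the standard inclusion $N \subset B(H)$; when $N$ is a full factor it is essentially a consequence of \cite[Theorem~A]{Mar18b}, which shows that the norm-closed linear span of $N \cdot JNJ$ contains $\cK(H)$, so that Lemma~\ref{lem.compact} (applied with the von Neumann algebra of that lemma taken to be $\C$) directly yields $y_n - t_n 1 \to 0$ $*$-strongly. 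In the general case where $N$ is not assumed full, one has to compensate by using the asymptotic $G$-invariance of $y_n$ and hypothesis~2 together, either by showing that $C^*(N \cdot JNJ \cdot \{U_g : g \in G\})$ still captures enough of $\cK(H)$ to invoke Lemma~\ref{lem.compact}, or by constructing $\tilde y_n$ concretely through a best-approximation argument against a faithful normal state of $N$ and its GNS representation.
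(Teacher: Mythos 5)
There is a genuine gap, and you have located it yourself: in the direction $2 \Rightarrow 1$ everything up to producing $y_n \in B(H)$ with $\theta(d_n) - y_n \ot 1 \ot 1 \to 0$ $*$-strongly is fine (this matches the paper), but the final step — converting $y_n$ into a bounded sequence in $N$ to which hypothesis~2 can be applied — is exactly the crux, and your two fallback suggestions do not close it. Invoking \cite[Theorem A]{Mar18b} for the inclusion $N \subset B(H)$ requires $N$ to be full, which is precisely what the proposition does not assume (hypothesis~2 is strictly weaker, and the whole point is to treat non-full $N$); and the alternative of showing that $C^*(N \cdot JNJ \cdot \{U_g\})$ captures enough of $\cK(H)$ is an unproven and rather doubtful claim, not a reduction to anything in the paper. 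So as written, the asymptotic relations $[y_n,a]\to 0$, $[y_n,JaJ]\to 0$, $U_g^* y_n U_g - y_n \to 0$ end the argument "morally" but not actually.

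The paper closes this gap by a different and simpler move: it never tries to approximate $y_n$ (its $b_n$) by elements of $N$ abstractly. Instead it slices the \emph{original} centralizing sequence: fixing a normal state $\om \in M_*$ and a Borel set $\cV \subset G$ of finite positive Haar measure with associated vector state $\om_\cV$ on $B(L^2(G))$, it sets $c_n = (\id \ot \om \ot \om_\cV)(a_n)$, which lies in $N$ by construction. Since $a_n - U^*(b_n \ot 1 \ot 1)U \to 0$ $*$-strongly, the slice of the latter is the average $\lambda(\cV)^{-1}\int_\cV U_g^* b_n U_g\,dg$, and the asymptotic invariance $U_g^* b_n U_g - b_n \to 0$ (obtained, as you note, from commutation of $a_n$ with $1 \ot 1 \ot \lambda_g$) plus dominated convergence gives $c_n - b_n \to 0$ $*$-strongly; hence $a_n - \gamma(c_n \ot 1) \to 0$ $*$-strongly with $c_n \in N$ centralizing and asymptotically $\al$-invariant, so hypothesis~2 applies directly. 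In short, the missing "best-approximation" is realized concretely as a slice-and-average of $a_n$ itself; without such a device your argument does not go through. (A lesser remark on $1 \Rightarrow 2$: asymptotic commutation of $\gamma(a_n \ot 1)$ with the subalgebra $\gamma(N \ovt M)$ and with the group unitaries is not by itself the centralizing property, which concerns all functionals in the predual; the clean route, as in the paper, is to show $\gamma(a_n \ot 1) - a_n \ot 1 \ot 1 \to 0$ $*$-strongly by dominated convergence and use that $a_n \ot 1 \ot 1$ is centralizing in the ambient algebra $N \ovt M \ovt B(L^2(G))$, then restrict functionals to the crossed product.)
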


From Theorem \ref{thm.main}, it thus follows that the two statements in Proposition \ref{prop.product-fullness} are equivalent when $G \actson^\al N$ is an arbitrary action and $G \actson^\be M$ is an action on a full factor for which the resulting homomorphism $G \to \Out M$ is faithful and has closed image.

\begin{proof}
We use the same notation as in the beginning of the proof of Proposition \ref{prop.product-fullness}.

We first prove that $1 \Rightarrow 2$. Take a centralizing sequence $a_n \in N$ satisfying $\al_g(a_n) - a_n \to 0$ $*$-strongly for every $g \in G$. It follows from the dominated convergence theorem that $\gamma(a_n \ot 1) - a_n \ot 1 \ot 1 \to 0$ $*$-strongly. We get for every $\om \in (N \ovt M \ovt B(L^2(G)))_*$ w.r.t.\ the norm of $(N \ovt M \ovt B(L^2(G)))_*$ that
\begin{align*}
& \| \om \cdot \gamma(a_n \ot 1) - \om \cdot (a_n \ot 1 \ot 1)\| \to 0 \;\; , \quad \| \gamma(a_n \ot 1) \cdot \om - (a_n \ot 1 \ot 1) \cdot \om \| \to 0 \quad\text{and}\\
& \| \om \cdot (a_n \ot 1 \ot 1)  - (a_n \ot 1 \ot 1) \cdot \om\| \to 0 \; .
\end{align*}
It follows that $\|\om \cdot \gamma(a_n \ot 1) - \gamma(a_n \ot 1) \cdot \om\| \to 0$ for all $\om \in (N \ovt M \ovt B(L^2(G)))_*$. A fortiori, $(\gamma(a_n \ot 1))_{n \in \N}$ is a centralizing sequence in $(N \ovt M) \rtimes_\gamma G$. By 1, the sequence $(\gamma(a_n \ot 1))_{n \in \N}$ is trivial, so that also the centralizing sequence $a_n \in N$ must be trivial.

We then prove the more subtle converse implication $2 \Rightarrow 1$, assuming that $\be$ is strictly outer and that $\be(M) \subset M \rtimes_\be G$ has stable $w$-spectral gap. Fix a centralizing sequence $a_n \in (N \ovt M) \rtimes_\gamma G$. We may assume that $\|a_n\| \leq 1$ for all $n$. We use the homomorphism $\theta$ defined in \eqref{eq.theta}. Since $(a_n)_{n \in \N}$ is centralizing, we get that $\theta(a_n) (1 \ot \be(b)) - (1 \ot \be(b)) \theta(a_n) \to 0$ $*$-strongly for every $b \in M$. Since $\be(M) \subset M \rtimes_\be G$ has stable $w$-spectral gap and trivial relative commutant, we find a sequence $b_n \in B(H)$ such that $\|b_n\| \leq 1$ for all $n$ and $\theta(a_n) - b_n \ot 1 \ot 1 \to 0$ $*$-strongly. It follows that $a_n - U^*(b_n \ot 1 \ot 1)U \to 0$ $*$-strongly.

Denote by $\lambda$ a left Haar measure on $G$. Fix a Borel set $\cV \subset G$ with $0 < \lambda(\cV) < +\infty$. Denote by $\om_\cV$ the vector state on $B(L^2(G))$ given by the unit vector $\lambda(\cV)^{-1/2} 1_\cV$. Fix a normal state $\om \in M_*$. Since $a_n \in N \ovt M \ovt B(L^2(G))$, define the sequence $c_n \in N$ by $c_n = (\id \ot \om \ot \om_\cV)(a_n)$. Since $a_n - U^*(b_n \ot 1 \ot 1)U \to 0$ $*$-strongly, we define
$$d_n = \lambda(\cV)^{-1} \int_\cV U_g^* b_n U_g \; dg$$
and conclude that $c_n - d_n \to 0$ $*$-strongly. Note that $\|d_n\| \leq 1$ for all $n \in \N$.

We claim that $b_n - d_n \to 0$ $*$-strongly. For every $g \in G$, we have that $(1 \ot 1 \ot \lambda_g^*) a_n (1 \ot 1 \ot \lambda_g) - a_n \to 0$ $*$-strongly. Applying $\theta$, it follows that
$$(U_g^* \ot 1 \ot \lambda_g^*) \theta(a_n) (U_g \ot 1 \ot \lambda_g) - \theta(a_n) \to 0 \quad\text{$*$-strongly.}$$
Since $\theta(a_n) - b_n \ot 1 \ot 1 \to 0$ $*$-strongly, we conclude that $U_g^* b_n U_g - b_n \to 0$ $*$-strongly for every $g \in G$. The claim then follows from the dominated convergence theorem.

Since $c_n - d_n \to 0$ $*$-strongly, it follows from the claim above that $c_n - b_n \to 0$ $*$-strongly. Then also $a_n - U^*(c_n \ot 1 \ot 1)U \to 0$ $*$-strongly. Note that $U^*(c_n \ot 1 \ot 1) U = \gamma(c_n \ot 1)$. Since $(a_n)_{n \in \N}$ is a centralizing sequence in $(N \ovt M) \rtimes_\gamma G$ and since $\gamma(N \ovt 1)$ is a von Neumann subalgebra, it follows that $(c_n)_{n \in \N}$ is a centralizing sequence in $N$. Since $a_n$ asymptotically commutes with $1 \ot 1 \ot \lambda_g$, we also get that $\al_g(c_n) - c_n \to 0$ $*$-strongly for every $g \in G$. By the assumption of 2, we find a bounded sequence $t_n \in \C$ such that $c_n - t_n 1 \to 0$ $*$-strongly. Then also $a_n - t_n 1 \to 0$ $*$-strongly and 1 is proven.
\end{proof}

\begin{remark}
Consider the setting of Proposition \ref{prop.product-fullness}. We have actually proven the following two statements.
\begin{enumlist}
\item Whenever $(a_n)_{n \in \N}$ is a centralizing sequence in $N$ satisfying $\al_g(a_n) - a_n \to 0$ $*$-strongly for every $g \in G$, the sequence $a_n \ot 1$ is centralizing in $(N \ovt M) \rtimes_\gamma G$.
\item Assume that $\be$ is strictly outer and that $M \subset M \rtimes_\be G$ has stable $w$-spectral gap. Then, for every centralizing sequence $(b_n)_{n \in \N}$ in $(N \ovt M) \rtimes_\gamma G$, there exists a sequence $(a_n)_{n \in \N}$ as in 1 such that $b_n - a_n \ot 1 \to 0$ $*$-strongly.
\end{enumlist}
\end{remark}

In \cite[Theorem 6]{Jon81}, Jones proved that if $\Z \actson^\al M$ is an outer action on a full II$_1$ factor then the crossed product $M \rtimes_\al \Z$ is full if and only if the image of $\Z$ in $\Out M$ is closed. In general, assume that $G$ is a countable group and $G \actson^\al M$ is an outer action on a factor $M$ with separable predual. In \cite[Theorem A]{Mar18a}, the following two implications are proven.

\begin{enumlist}
\item If $M$ is full and the image of $G$ in $\Out M$ is closed, then $M \rtimes_\al G$ is full.
\item If $G$ is amenable and $M \rtimes_\al G$ is full, then $M$ is full and the image of $G$ in $\Out M$ is closed.
\end{enumlist}

In Theorem \ref{thm.main}, we have proven that statement~1 still holds for lcsc groups $G$. As the following example shows, statement~2 is wrong for locally compact $G$. But a weaker variant of statement~2 might still be true for lcsc groups, see question \ref{last.quest}.

\begin{example}\label{ex.full-not-full}
Let $K$ be an infinite compact abelian group and let $\Gamma \subset K$ be any countable dense subgroup. Choose a strongly continuous outer action $K \actson^\be N$ of $K$ on a full factor $N$, e.g.\ the Bogoljubov action of $K$ on $L(\F_\infty)$ associated with the left regular representation of $K$ as we recalled in Section \ref{sec.Araki-Woods}. We restrict $\be$ to an action of $\Gamma$ on $N$ and define $M = N \rtimes_\be \Gamma$. Denote $G = \Gammah$ and define $\al = \widehat{\be}$ as the dual action of $G$ on $M$. Since $N \subset M^\al$, we get that $(M^\al)' \cap M = \C 1$ and it follows that $\al$ is strictly outer. Note that $G$ is a second countable compact abelian group. The crossed product $M \rtimes_\al G \cong N \ovt B(\ell^2(\Gamma))$ is a full factor. Nevertheless, $M$ is not a full factor. This follows by taking a sequence $g_n \in \Gamma$ such that $g_n \to \infty$ in $\Gamma$ and $g_n \to e$ in $K$, so that $\al_{g_n} \to \id$ in $\Aut N$. We claim that the associated unitaries $u_{g_n} \in M$ form a nontrivial centralizing sequence in $M$. Denote by $N \subset B(H)$ the standard representation of $N$ and let $(U_g)_{g \in \Gamma}$ be the canonical implementation of the action $\be$. Then, $M = N \rtimes_\be \Gamma$ can be viewed as the von Neumann subalgebra of $B(H) \ovt L(\Gamma)$ generated by $N \ot 1$ and the unitary operators $U_g \ot \lambda_g$, $g \in \Gamma$. Since $\al_{g_n} \to \id$, we have $U_{g_n} \to 1$ $*$-strongly. Since the unitaries $1 \ot \lambda_g$ commute with $B(H) \ovt L(\Gamma)$, it follows that the sequence $U_{g_n} \ot \lambda_{g_n}$ is centralizing for $N \rtimes_\be \Gamma$.
\end{example}

\begin{question}\label{last.quest}
Let $G$ be an amenable lcsc group and let $G \actson^\al M$ be an outer action on a full factor $M$ with separable predual. Assume that $M \rtimes_\al G$ is full. Does it follow that the image of $G$ in $\Out M$ is closed?
\end{question}

\begin{remark}
Finally observe that Example \ref{ex.full-not-full} and Question \ref{last.quest} are quite subtle. In Example \ref{ex.full-not-full}, the factor $M \rtimes_\al G$ is full and thus, there is no nontrivial centralizing sequence $(a_n)_{n \in \N}$ in $M$ satisfying $\al_g(a_n) - a_n \to 0$ $*$-strongly for all $g \in G$. On the other hand, whenever $\Lambda \subset G$ is a countable subgroup, it follows from \cite[Theorem 3.3]{Mar18a} that there exists a nontrivial centralizing sequence $(a_n)_{n \in \N}$ in $M$ satisfying $\al_g(a_n) - a_n \to 0$ $*$-strongly for all $g \in \Lambda$. This can be seen as follows. Choose an ultrafilter $\om$ on $\N$ and denote by $M^\om$ the Ocneanu ultrapower. Then, $M' \cap M^\om$ is a diffuse von Neumann algebra with a faithful normal state $\vphi$ given by $\lim_{n \to \om} a_n = \vphi(a)1$ weakly for every element $a \in M' \cap M^\om$ represented by a centralizing sequence $(a_n)_{n \in \N}$. The canonical action $\Lambda \actson^\al M' \cap M^\om$ is thus $\vphi$-preserving. Since $\Lambda$ is an abelian group, it follows from \cite[Theorem 3.3]{Mar18a} that this action is not strongly ergodic. Using a diagonal argument, we then find the required centralizing sequence.

It then also follows that there exists a nontrivial centralizing net $(a_i)_{i \in I}$ in $M$ satisfying $\al_g(a_i) - a_i \to 0$ $*$-strongly for all $g \in G$. Since the dominated convergence theorem fails for nets, we cannot deduce that $(a_i)_{i \in I}$ defines a nontrivial centralizing net in $M \rtimes_\al G$, which would have been absurd.
\end{remark}

\end{document}